\tikzset{
    block/.style = {draw, rectangle, 
        minimum height=1cm, 
        minimum width=2cm},
    input/.style = {coordinate,node distance=1cm},
    output/.style = {coordinate,node distance=4cm},
    arrow/.style={draw, -latex,node distance=2cm},
    pinstyle/.style = {pin edge={latex-, black,node distance=2cm}},
    sum/.style = {draw, circle, node distance=1cm}
}
\newcommand{\specificthanks}[1]{\@fnsymbol{#1}}% Inserts a specific \thanks symbol
\newtheorem{theorem}{Theorem}[section]
\newtheorem{lemma}[theorem]{Lemma}
\newtheorem{proposition}[theorem]{Proposition}
\theoremstyle{definition}
\newtheorem{definition}[theorem]{Definition}
\newtheorem{example}[theorem]{Example}
\theoremstyle{remark}
\newtheorem{remark}[theorem]{Remark}
\newtheorem{assumption}[theorem]{Assumption}
\renewcommand{\Re}{\mathrm{Re\,}}
\renewcommand{\Im}{\mathrm{Im\,}}
\renewcommand{\>}{\rangle}
\renewcommand{\H}{\mathcal{H}}
\newcommand{\BL}{\mathcal{L}}
\newcommand{\R}{\mathbb{R}}		% R reele Zahlen
\newcommand{\N}{\mathbb{N}}	
\newcommand{\C}{\mathbb{C}}
\newcommand{\mS}{\mathcal{S}}
\newcommand{\mA}{\mathcal{A}}
\newcommand{\T}{\mathcal{T}}	
\newcommand{\D}{\mathcal{D}}
\newcommand{\norm}[1]{\left\| #1 \right\|}	% Normstriche
\newcommand{\norms}[1]{\Bigl\|#1\Bigr\|} %Normstriche an Summe
\newcommand{\normk}[1]{\bigl\|#1\bigr\|} %Normstriche in Summe
\newcommand{\abs}[1]{\left| #1 \right|}	% Betragsstriche
\renewcommand{\phi}{\varphi}
\newcommand{\riesz}{discrete Riesz spectral operator\xspace}
\newcommand{\rieszp}{\riesz}
	\newcommand{\ran}{\mathop{\mathrm{ran}}}
	\DeclareMathOperator{\Span}{span}
\DeclareMathOperator{\diag}{diag}
\renewcommand*{\@fnsymbol}[1]{\ifcase#1\or*\else\@arabic{\numexpr#1-1\relax}\fi}
\title
{Riesz bases of port-Hamiltonian systems\thanks {
Support by Deutsche Forschungsgemeinschaft (Grant JA 735/13-1) 
is gratefully acknowledged.}}
\author{Birgit Jacob\thanks{University of Wuppertal, School of Mathematics and Natural Sciences, IMACM
Gau\ss stra\ss e 20,
D-42119 Wuppertal, Germany, $\{$bjacob,julia.kaiser$\}$@uni-wuppertal.de}
\and Julia T.~Kaiser\footnotemark[2] \and Hans Zwart\thanks{University of Twente, Department of Applied Mathematics,
7500 AE Enschede, The Netherlands, h.j.zwart@utwente.nl}\textsuperscript{\phantom{2},\specificthanks{0}}\thanks{Department of Mechanical Engineering, Technische Universiteit Eindhoven, 5600 MB Eindhoven, The Netherlands}}
\date{}
\begin{document}
\maketitle
\thispagestyle{empty}
\begin{abstract}
 The  location of the spectrum and the Riesz basis property of  well-posed homogeneous infinite-dimensional linear port-Hamiltonian systems on a 1D spatial domain are studied. It is shown that  the Riesz basis property is equivalent to the fact that system operator generates a strongly continuous group. Moreover, in this situation the spectrum consists of eigenvalues only, located in a strip parallel to the imaginary axis  and  they can decomposed into finitely many sets having each a uniform gap. 
\end{abstract}

\noindent
{\bf Mathematics Subject Classification:} 35P10, 47B06, 47D06, 35L40.\\
 {\bf Keywords:} Riesz spectral operator, infinite-dimensional linear port-Ha\-mil\-tonian system, strongly continuous group. \\

\section{Introduction}\label{intro}

%\begin{equation}
%\norm{\sum_{i\in \N}(sI\tilde{E}_i-A_i)^{-1}x}^2
%\left\|\sum_{i\in \N}(sI\tilde{E}_i-A_i)^{-1}x^2\right\|
%\end{equation}
%$\norm{N_i^k}$ $\left\|N_i^k \right\|$
It is well-known that the eigenvectors of a compact self-adjoint operator form an orthonormal basis of the underlying Hilbert space. 
%For these operators many properties of infinite-dimensional systems such as stability, stabilizability and controllability can be characterized in an elegant manner.
%This  knowledge has some 
%solutions in the theory of infinite dimensional systems are easy to formulate. Thus, it arises the question of a suitable 
%generalization of this concept. 
In the 1960s Dunford and Schwarz \cite{DunSch71} introduced the more general notion of spectral 
operators. Further, Curtain \cite{Cur84} analysed discrete spectral operators, i.e., spectral operators with  compact resolvent, and the class of Riesz spectral operators was formulated in \cite{CurZwa95} and extended in \cite{GuoZwa01} 
to characterize also operators with multiple eigenvalues. For Riesz spectral operator its eigenvectors still form a basis, but this basis is assumed to be Riesz basis. Since a Riesz basis is isomorphic to an orthonormal basis, many of the nice properties of compact self-adjoint operators cary over to Riesz spectral operators. For instance, solutions of the abstract differential equation $\dot{x}(t) = A x(t) + B u(t)$, with $A$ a Riesz spectral operator, can still described by an eigenfunction expansion of non-harmonic Fourier series. This enables that many properties of these infinite-dimensional systems such as stability, stabilizability and controllability can be characterized in an elegant manner, see e.g.\ \cite{CurZwa95,CuZw20}.

 %The Riesz basis property is one of the most important characteristics of Cauchy problems from a theoretical and practical point of you. One important feature is that the dynamic behavior can be described by an eigenfunction expansion of non-harmonic Fourier series.

In this article, we investigate the Riesz basis property of a special class of infinite-dimensional systems, namely port-Hamiltonian systems on a 1D spatial domain. Here by the Riesz basis property we mean that the associated system operator is a \riesz, see Definition \ref{defDRSO}. 
Port-based network modelling of complex physical systems leads to port-Hamiltonian systems. For 
finite-dimensional systems there is by now a well-established theory
\cite{vanDerSchaft06,EbMS07,DuinMacc09}. 
The port-Hamiltonian approach has been extended to the  
infinite-dimensional situation by a functional analytic approach in \cite{Vil07,ZwaGorMasVil10,JacZwa12,JacMorZwa15,Aug16,JacZwa19}. This approach has been successfully used to derive simple verifiable conditions for 
well-posedness \cite{GorZwaMas05,Vil07,ZwaGorMasVil10,JacZwa12,JacMorZwa15,JacKai18}, stability 
\cite{JacZwa12,AugJac14} and stabilization \cite{RaZwGo17,RaGoMaZw14,AugJac14,SchZwa18} and robust regulation \cite{HuPa}. 
The study of the Riesz basis property for infinite-dimensional port-Hamiltonian systems has started with the thesis by Villegas \cite[Chapter 4]{Vil07}.
Using results on first order eigenvalue problems by Tretter \cite{Tret00a,Tret00b}, he obtained a sufficient condition. However, it is not easy to see when this technical  sufficient condition is satisfied.

Many systems have a Riesz basis of eigenfunctions, see e.g.\ \cite{GuoXu04,XuGuo03}. In the monograph \cite[Section 4.3]{GuoWan19} B.-Z.\ Guo and J.-M.\ Wang study the Riesz basis property for a closely related class of systems, that is, hyperbolic systems of the form $\frac{\partial x}{\partial t} = K(\zeta) \frac{\partial x}{\partial \zeta} + C(\zeta) x$ with $K$ and $C$ diagonal. Note, that (almost) every port-Hamiltonian system on a one dimensional spatial domain can be transformed into a hyperbolic system of this form.  However, in general not with a diagonal $C$. Furthermore, the boundary conditions will be more general, see \cite{ZwaGorMasVil10,JacZwa12} or the proof of Lemma \ref{lsgode}. Our main result thus generalizes their theorem \cite[Theorem 4.11]{GuoWan19}. We remark, that in this situation the notions of Riesz basis of subspaces and Riesz basis with parentheses are equivalent. Moreover, in \cite{XuWei11} the Riesz basis property is investigated for operators perturbed by output feedback.  

Our main result shows that a linear infinite-dimensional port-Hamiltonian system on a 1D spatial domain has the Riesz basis property if and only if the system operator generates a strongly continuous group. Important to note here is that we don't need constant coefficients, nor extra assumption. Since the group property is equivalent to a simple matrix condition, our results enable us to very quickly check whether the Riesz basis property holds, see also our example section.  
 Our proof combines methods from complex analysis, differential equations and mathematical systems theory. In particular, we use the fact that every well-posed port-Hamiltonian control system \eqref{eqn:pde} is exactly controllable in finite time. We refer the reader to Section \ref{sec:PHS} for the definition of exact controllability.

%We note, that our approach is not based on a case-by-case study  and it is not necessary to  calculate  eigenvalues and eigenvectors, which is not easy in general. Moreover, we will provide  an easy verifiable matrix condition for the Riesz basis property. 
%{\red Noch etwas zum Beweis sagen}.
%NEEDED??  {\color{blue} We proceed as follows.  In Section \ref{sec2} {\red .....}}

\noindent
{\bf Notation:}  $X$ and $Y$ are complex and separable Hilbert space. We denote the space of all bounded linear operators from $X$ to $Y$ by $\BL(X,Y)$. To shorten notation we write $\BL(X):=\BL(X,X).$ 
%We denote the space of the bounded operators on $X$ by $\BL(X)$. 
Throughout this article we assume that
$A:{\cal D}(A)\subset X\rightarrow X$ is a closed, densely defined,
linear operator.
The set $\sigma(A)$ refers to the spectrum of $A$  and $\sigma_p(A)$ to its point spectrum. % and for $\lambda\in \rho(B)$ we write $R(\lambda,B)=(\lambda-B)^{-1}$ for the resolvent operator.
We denote by $\rho(A):=\mathbb C\backslash \sigma(A)$ the resolvent set of  $A$, and
%by $\sigma(A)$ its spectrum and by $\sigma_p(A)$ its point spectrum.  
for each $s\in\rho(A)$ we denote the resolvent of $A$ by $(s-A)^{-1}:=(sI-A)^{-1}$.
 The growth bound of a $C_0$-semigroup $(T(t))_{t\ge 0}$ with generator $A$ is denoted by $\omega_0(A)$ and $s(A):=\sup_{s \in \sigma(A)}\Re s $ is the  \emph{spectral bound}  of $A$. 
$H^1((0,1);\mathbb C^d)$ denotes the first order Sobolev space on the interval $(0,1)$.

\section{Main Result}\label{sec2}

In this section we formulate our main result. We start with  the definition of discrete Riesz spectral operators.
\begin{definition}\label{sp}
For an operator $A$ on $X$ we call  $\gamma \subset \sigma(A)$ a \emph{compact spectral set} if $\gamma$ is a compact subset of $\C$ which is open and closed in $\sigma(A)$.
The \emph{spectral projection} on the spectral subset $\gamma$ is defined as
\begin{equation*}
{E}(\gamma)=\frac{1}{2\pi i}\int_{\Gamma}(s-A)^{-1} ds,
\end{equation*}
where $\Gamma$ is a closed Jordan curve containing every point of $\gamma$ and no point of $\sigma(A)\setminus \gamma$.
\end{definition}

In this article operators with compact resolvent are of  particular interest.
The spectrum of these operators is a denumerable set of points with no finite accumulation point, c.f.~\cite[Lemma XIX.2]{DunSch71}. Furthermore, every point in the spectrum is an eigenvalue which has finite algebraic as well as  finite geometric multiplicity, c.f. \cite[Theorem XV.2.3]{GohGolKaa90}. If $(s_n)_{n\in\mathbb N}$ is the spectrum of an operator with compact resolvent we write $E_n:={E}((s_n)), n\in \N$.
 %
%\marginpar{why only eigenvalue sin the spectrum}
%
\begin{definition}\label{defDRSO}
An operator $A$ with compact resolvent and $\sigma(A)=(s_n)_{n\in \N}$ is a \emph{discrete Riesz spectral operator},
if
\begin{enumerate}
	\item for every $n\in \N$ there exists $N_n\in \BL(X)$  
	such that $AE_n = (s_n  + N_n)E_n$, 
	\item the sequence of closed subspaces $(E_n (X))_{n \in \N}$  is a \emph{Riesz basis of subspaces of 
$X$}, that is,  
$\Span (E_n (X))_{n \in \N}$ is dense and
there 
exists an isomorphism $T\in \BL(X)$, such that $(T E_n (X))_{n\in\N}$ is system of pairwise orthogonal subspaces of $X$.
%\item there exists an isomorphism $T\in \BL(X)$, such that $E_n:=T\tilde E_n$ is an orthogonal projection for every $n\in \N$,
%	\item $\sum_{n\in \N} \tilde E_n = I$, and $\tilde E_n \tilde E_m = 0 $ for $n \neq m$, and
	\item $N:= \sum_{n\in \N} N_n$ is bounded and nilpotent.
\end{enumerate}
\end{definition}

\begin{remark}
Discrete Riesz spectral operator are spectral operators in the sense of Dunford and Schwartz \cite{DunSch71}.  However, we additionally assume that the operator has a compact resolvent, which are discrete operators in the sense of Dunford and Schwartz \cite[Definition XIX.1]{DunSch71}.
Every discrete Riesz spectral operator is a Riesz spectral operator in the sense of Guo and Zwart \cite{GuoZwa01}  and these are again spectral operator in the sense of Dunford and Schwartz. In Curtain and Zwart \cite{CurZwa95} a slightly stronger notion is considered, where all eigenvalues have to be simple. However, they do not require that the operator has a compact resolvent. %{\red Erfüllen discrete Riesz operators die spectrum determined growth assumption?}
\end{remark}

\begin{remark}
If a sequence of vectors $(x_n)_{n \in \N}$ in  $X$ is a \emph{Riesz basis of $X$}, that is,  there 
is an isomorphism $T\in \BL(X)$, such that $(Tx_n)_{n\in N}$ is an 
orthonormal 
basis of 
$X$, then clearly $\{{\rm span}\, x_n\}_{n \in \N}$  is a Riesz basis of subspaces of 
$X$.
\end{remark}
\begin{remark}
If $A$ is a discrete Riesz operator, then clearly $E_n$ commute with $A$ and $A$ is equivalent to  the infinite matrix
\[ 
  A={\mathrm{diag}}( A_1, A_2, \dots, A_n, \dots ),
\]
where $A_n$ is a square matrix which corresponds to the restriction $A|_{ E_nX}$ of $A$.
Then $A= \sum_{n\in \N} i_n A_n E_n$, where $i_n$ is the (natural) inclusion operator and $A_n$ is identified with $(s_n+N_n)|_{E_n}$.
\end{remark}

We consider first order linear port-Hamiltonian systems on a one-dimensional 
spatial domain  %with boundary control 
of the form 
\begin{align}
\frac{\partial x}{\partial t}(\zeta,t) &= \left( P_1
  \frac{\partial}{\partial \zeta} + P_0\right) ({\cal H}({\zeta})
x(\zeta,t)),\nonumber \\
x(\zeta,0) &= x_0(\zeta),\label{eqn:pde0}\\
0&= \begin{bmatrix} W_1 & W_0 \end{bmatrix} \begin{bmatrix} ({\cal H}x)(1,t) \\ ({\cal
         H}x)(0,t)\end{bmatrix}, \nonumber
\end{align}
where $\,\, \zeta \in [0,1]$ and $t\ge 0$ and the Assumption \ref{A1} is fulfilled. Equation \eqref{eqn:pde0} describes a special class of port-Hamiltonian systems, which  cover in particular the wave equation, the transport equation and the Timoshenko beam as well as coupled systems. 
For more information  we refer to  \cite{JacZwa12,JacZwa19}. 

\begin{assumption}\label{A1}
The $d\times d$ Hermitian matrix $P_1$ is  invertible, 
$P_0$ is a  $d\times d$  skew-symmetric matrix, $\begin{bmatrix} W_1 & W_0 \end{bmatrix}$ is a full row rank $d\times 
2d$-matrix, and ${\cal H}(\zeta)$ is a positive $d\times d$ Hermitian matrix for a.e.~$\zeta\in (0,1)$  satisfying ${\cal H}, {\cal 
H}^{-1}\in L^{\infty}(0,1;\mathbb C^{d\times d})$. Thus, the matrix $P_1\mathcal{H}(\zeta)$ can be diagonalized as $P_1\mathcal{H}(\zeta)=S^{-1}(\zeta)\Delta(\zeta)S(\zeta)$, where 
$\Delta(\zeta)$ is a diagonal matrix and  $S(\zeta)$ is an invertible matrix  for a.e.~$\zeta\in (0,1)$. We suppose the 
technical assumption that  $S^{-1}$, $S$,  $\Delta: [0,1] \rightarrow \mathbb{C}^{d\times 
d}$ are continuously differentiable. 
\end{assumption}

\begin{definition}
Let $P_0,P_1, \H$ satisfy Assumption \ref{A1} and $X:=L^2((0,1);\mathbb{C}^d)$.
Then the operator $A:\D(A)\subset X\to X$ defined by 
\begin{equation}\label{operatorA}
 Ax:= \left( P_1 \frac{d}{d\zeta} + P_0\right) ({\cal H}x), \qquad x\in {\cal D}(A), 
\end{equation}
\begin{equation}
\label{domainA}
{\cal D}(A) := \left\{ x\in X\mid  {\cal H}x\in H^{1}((0,1);\mathbb C^d) \text{ and } \begin{bmatrix} W_1 & W_0 
\end{bmatrix} \begin{bmatrix} ({\cal H}x)(1) \\ ({\cal
         H}x)(0)\end{bmatrix}=0 \right\},
\end{equation}
is called \emph{port-Hamiltonian operator}.
\end{definition}
%}

We equip the space $X:=L^2((0,1);\mathbb{C}^d)$ with 
the 
energy norm $\sqrt{\langle \cdot,{\cal H} \cdot\rangle}$, where $\langle\cdot,\cdot\rangle$ denotes the standard inner 
product on $L^2((0,1);\mathbb{C}^d)$. Due to our Assumption \ref{A1} the energy norm is equivalent to the standard norm on  
$L^2((0,1);\mathbb{C}^d)$.

Furthermore,  we emphasize that a port-Hamiltonian operator which generate a $C_0$-semigroup is closed and that its
 resolvent is compact, see \cite{Aug16}.

Let $A$ generate  a $C_0$-semigroup $(T(t))_{t\ge 0}$ on $X$. Then 
a closed subspace $V\subset X$ is called \emph{$(T(t))_{t\ge 0}$-invariant}, if $T(t)V\subseteq V$ for all $t\geq 0$.  In this case the restriction $(T(t)|_{V})_{t\ge 0}$ is again a $C_0$-semigroup with generator $A|_{V}$ on $V$, c.f.~\cite[Lemma 2.5.3]{CurZwa95}. 
We say that a set $(s_n)_{n\in\mathbb N}\subset \mathbb C$ has \emph{uniform gap}, if  
$
 \inf_{n\neq m}\abs{s_n-s_m}>0$ for $n,m\in \N$. By $Z^+(\zeta)$, we denote the span of the eigenvectors of $P_1\H(\zeta)$ corresponding to the positive eigenvalues of 
$P_1\H(\zeta)$ and by $Z^-(\zeta)$  the span of the eigenvectors of $P_1\H(\zeta)$ corresponding to the negative eigenvalues 
of 
$P_1\H(\zeta)$.

We are now in  the position to formulate our main result.

\begin{theorem}\label{main}
Let $A$ be a port-Hamiltonian operator and the generator of a $C_0$-semigroup. Then the following is equivalent:
\begin{enumerate}
 \item $A$ is a \riesz.
 \item $A$ is the generator of a $C_0$-group.
 \item $W_1\H(1)Z^+(1)\oplus W_0\H(0)Z^-(0)=W_1\H(1)Z^-(1)\oplus W_0\H(0)Z^+(0)=\C^d$.
\end{enumerate}
If one of the equivalent conditions are satisfied, then $\sigma(A)=\sigma_p(A)$ lie in a  strip parallel to 
the imaginary axis, the eigenvalues (counted according to the algebraic multiplicity) can be decomposed into finitely many sets each having a uniform gap and $A$ satisfies the spectrum determined growth assumption, that is, $\omega_0(A)=s(A)$.
\end{theorem}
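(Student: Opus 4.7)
The plan is to prove the cycle $(3)\Leftrightarrow(2)\Rightarrow(1)\Rightarrow(2)$ and to extract the remaining structural assertions (spectrum in a strip, finite decomposition with uniform gap, spectrum-determined growth) from the analysis used in $(2)\Rightarrow(1)$. The equivalence $(2)\Leftrightarrow(3)$ I would obtain by combining the well-known fact that $A$ generates a $C_0$-group iff both $A$ and $-A$ generate $C_0$-semigroups with the standard matrix criterion for semigroup generation of port-Hamiltonian operators \cite{JacZwa12}. Translating that criterion via the diagonalization $P_1\H=S^{-1}\Delta S$ into the language of the characteristic subspaces $Z^{\pm}(\zeta)$ yields $W_1\H(1)Z^{+}(1)\oplus W_0\H(0)Z^{-}(0)=\C^d$; applying the same criterion to $-A$, which replaces $P_1$ by $-P_1$ and hence interchanges $Z^{+}$ with $Z^{-}$, gives the second identity.

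For the main direction $(2)\Rightarrow(1)$, the strip bound $|\Re s|\le\max(\omega_0(A),\omega_0(-A))$ for $\sigma(A)$ is automatic for a group generator, and compactness of the resolvent \cite{Aug16} gives $\sigma(A)=\sigma_p(A)$. To locate the eigenvalues and eigenfunctions I would use the change of variable $y:=Sx$ from Assumption \ref{A1}: the eigenvalue equation $Ax=sx$ becomes a first-order linear ODE
\[
 y'(\zeta)=s\,\Delta(\zeta)^{-1}y(\zeta)+R(\zeta)y(\zeta),\qquad \zeta\in(0,1),
\]
with a bounded residual $R$, and the transformed boundary condition is $\tilde W_1 y(1)+\tilde W_0 y(0)=0$ with $\tilde W_j=W_j\H(j)S(j)^{-1}$. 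The eigenvalues of $A$ are therefore the zeros of $s\mapsto\det(\tilde W_1\Phi(1;s)+\tilde W_0)$, where $\Phi(\zeta;s)$ is the fundamental solution. A standard WKB-type expansion for large $|\Im s|$ yields $\Phi(1;s)=D(s)(I+O(1/s))$ with $D(s)=\diag(e^{s\alpha_k})$ and $\alpha_k=\int_0^1\delta_k(\zeta)^{-1}\,d\zeta$. Hypothesis (3) ensures that the limiting exponential polynomial $\det(\tilde W_1 D(s)+\tilde W_0)$ is nondegenerate, its zeros stay in a fixed strip and split into finitely many arithmetic progressions on vertical lines, each with uniform gap; a Rouché argument transfers this structure to the true eigenvalues up to an $O(1/|s|)$ correction.

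The hard part will be upgrading this asymptotic eigenvalue/eigenfunction information to a genuine Riesz basis of generalized eigenvectors. I would invoke a Bari-type perturbation theorem for Riesz bases of exponentials, using the $\ell^2$-smallness of the $O(1/s)$ correction to transfer the Riesz basis property from the constant-coefficient limit system to the true eigenvectors of $A$. Completeness, i.e.\ that the closed linear span of the generalized eigenvectors equals $X$, I would obtain from the exact controllability of the port-Hamiltonian control system associated with $A$---available precisely because $A$ generates a group, as recalled in Section \ref{sec:PHS}---since exact controllability forbids any nonzero vector orthogonal to every generalized eigenvector. The uniform-gap decomposition of $\sigma(A)$ and the spectrum-determined growth $\omega_0(A)=s(A)$ then follow at once from the Riesz basis expansion. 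Finally, $(1)\Rightarrow(2)$ is cheap: the discrete Riesz spectral form of $A$ together with the semigroup assumption gives $\sup_n\Re s_n<\infty$, and reading the boundary traces of the eigenvectors in the diagonal coordinates recovers condition (3), which by the already-established $(3)\Rightarrow(2)$ closes the cycle.
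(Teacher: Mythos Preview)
Your treatment of $(2)\Leftrightarrow(3)$ matches the paper. For $(2)\Rightarrow(1)$ you take a genuinely different route. The paper does \emph{not} do WKB asymptotics, Rouch\'e, or a Bari-type perturbation of a reference basis. Instead it shows that the characteristic function $g(s)=\det\bigl[W_1\H(1)\Psi^s(1)+W_0\H(0)\bigr]$ is, after rotation by $i$, of sine type. The upper bound is the easy ODE estimate you also have; the crucial lower bound $|g(r+i\omega)|\ge c>0$ on a vertical line is obtained not from asymptotics but from \emph{admissibility} of the control operator $B$: if $g(r+i\omega_k)\to 0$ one builds solutions of the boundary value problem with exploding $L^2$-norm, contradicting the uniform estimate $\|(s-A_{-1})^{-1}B\|\le M_\omega/\sqrt{\Re s-\omega}$. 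Levin's theorem on sine-type functions then gives the finite uniform-gap decomposition. The Riesz basis itself comes from a general black box (essentially \cite{Zwa10}): a $C_0$-group generator with compact resolvent, complete generalized eigenvectors, and eigenvalues decomposable into finitely many uniformly separated sets is automatically a discrete Riesz spectral operator. No eigenvector asymptotics are needed. Your WKB/Bari approach could in principle be pushed through, but it is considerably more delicate (it needs control of generalized eigenspaces and $\ell^2$-summable corrections, not just $O(1/s)$), and the paper's route sidesteps all of that.

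Your $(1)\Rightarrow(2)$ has a genuine gap. From ``$A$ is a discrete Riesz spectral operator and generates a semigroup'' you only get $\sup_n\Re s_n<\infty$; nothing rules out $\Re s_n\to-\infty$, and indeed the paper exhibits a diagonal Riesz spectral operator on $\ell^2$ with exactly that behaviour which is even exactly controllable by a one-dimensional admissible input. So you cannot conclude the strip bound, and ``reading off condition (3) from boundary traces of eigenvectors'' is not an argument: condition~(3) is a fixed algebraic statement about $W_0,W_1,\H(0),\H(1)$, and the existence of a Riesz basis does not by itself force it. The paper's proof of $(1)\Rightarrow(2)$ is by contradiction and uses the port-Hamiltonian structure in an essential way. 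Assuming $A$ does not generate a group, one diagonalises and reflects the negative characteristics to obtain an operator $\tilde A=\tilde\Lambda(\zeta)\,d/d\zeta$ (a bounded perturbation of a similarity transform of $A$) whose boundary matrix is \emph{singular}; a direct computation then shows $\|(t-\tilde A)^{-1}\|\ge a\,e^{b|t|}$ for real $t\to-\infty$. On the other hand, exact observability of the dual system and the Hautus test force the eigenvalues of $A$ to be so sparse in the far left half-plane (at most $d$ in each ball of radius $\sim|\Re s|^2$) that one can find $t_n\to-\infty$ with $d(t_n,\sigma(A))\to\infty$; the Riesz spectral resolvent estimate then gives $\|(t_n-\tilde A)^{-1}\|\to 0$, a contradiction. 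This interplay between the Hautus test, the finite input dimension, and the explicit resolvent blow-up for singular boundary matrices is the missing ingredient in your sketch.
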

The proof of Theorem \ref{main} will be given in Section \ref{secproof}.

\begin{remark}
It is particularly easy to check if a port-Hamiltonian operator $A$ is the generator of a unitary $C_0$-group, c.f.~\cite[Theorem 1.1]{JacMorZwa15}. This is actually the case if and only if
 $\widetilde{W}_B\Sigma \widetilde{W}_B^*= 0$,
where $\widetilde{W}_B:=\begin{bsmallmatrix} W_1 & W_0 \end{bsmallmatrix}\begin{bsmallmatrix} P_1 & -P_1 \\ I & 
I\end{bsmallmatrix}^{-1}
$ and $\Sigma:=\begin{bsmallmatrix}0&I\\ I&0 \end{bsmallmatrix}$.
In this case $A$ is even a skew-adjoint operator by  Stone's Theorem, c.f.~\cite[Theorem VII.3.24]{EngNag00}, which implies that the normalized eigenvectors form an orthonormal basis of $X$.
\end{remark}

\section{Preliminaries on discrete Riesz spectral operators}

Riesz bases of subspace have the following useful characterizations.
\begin{proposition}\cite[Definition 1.4]{GuoZwa01}\label{prop-RBP}
Let $A$ be an operator with compact resolvent and $\sigma(A)=(s_n)_{n\in \mathbb N}$. Then 
the sequence of subspaces $(E_n (X))_{n \in \N}$ is a Riesz basis of subspaces of 
$X$
if and only if there exists positive constants ${m_1}$ and ${m_2}$ such that it holds
\begin{equation*}
{m_1}\norm{x}^2 \leq \sum_{n\in\N} \norm{E_n x }^2\leq {m_2}\norm{x}^2,\quad x\in X.
\end{equation*}
\end{proposition}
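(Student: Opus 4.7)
The key observation is that, because $A$ has compact resolvent, the spectral projections satisfy $E_n E_m = \delta_{nm} E_n$ (they project onto isolated eigenvalues of finite multiplicity), so the natural map to consider is $V \colon X \to H := \bigoplus_{n \in \N} E_n(X)$ (Hilbert direct sum, each $E_n(X)$ equipped with the inner product inherited from $X$), defined by $Vx := (E_n x)_n$. The frame inequality is exactly the statement that $V$ is an isomorphism onto $H$, and from such an isomorphism one easily manufactures an isomorphism of $X$ realizing the orthogonalization.

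For the direction $\Rightarrow$, suppose $T \in \BL(X)$ is an isomorphism with $(TE_n(X))_n$ pairwise orthogonal and with dense span. Let $Q_n$ be the orthogonal projection onto $TE_n(X)$; I first identify $TE_n T^{-1}$ with $Q_n$. The operator $P_n := TE_nT^{-1}$ is a bounded projection onto $TE_n(X)$ and inherits $P_nP_m = \delta_{nm}P_n$. For $m \neq n$ and $y \in TE_m(X)$, $P_n y = P_n P_m y = 0$. Pairwise orthogonality together with density of $\Span(TE_n(X))_n$ gives $(TE_n(X))^\perp = \overline{\Span}_{m\neq n}\, TE_m(X)$, so $P_n$ vanishes on $(TE_n(X))^\perp$ and therefore equals $Q_n$. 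Pythagoras yields $\sum_n \|Q_n y\|^2 = \|y\|^2$; substituting $y = Tx$ and using $Q_n T = TE_n$ together with the bounds $\|T\|, \|T^{-1}\|$, one obtains the frame inequality with $m_1 = (\|T\|\|T^{-1}\|)^{-2}$ and $m_2 = (\|T\|\|T^{-1}\|)^2$.

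For the direction $\Leftarrow$, the upper bound makes $V$ bounded and the lower bound makes it bounded below, so $V$ is an isomorphism onto its closed range. To see $V$ is surjective, fix $(y_n) \in H$ and consider the partial sums $s_{N,M} := \sum_{n=N}^M y_n$. Using $y_n \in E_n(X)$ and $E_k E_n = \delta_{kn} E_n$, one computes $E_k s_{N,M} = y_k$ for $N \le k \le M$ and $E_k s_{N,M} = 0$ otherwise, whence the lower frame bound gives
\[
m_1 \|s_{N,M}\|^2 \le \sum_k \|E_k s_{N,M}\|^2 = \sum_{k=N}^M \|y_k\|^2 \xrightarrow{N,M\to\infty} 0.
\]
Hence $\sum_n y_n$ converges to some $x \in X$, and continuity of each $E_k$ gives $Vx = (y_n)$. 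As a by-product, $x = \sum_n E_n x$ for every $x$, which also establishes density of $\Span(E_n(X))_n$. To build the required $T$, I fix any orthogonal decomposition $X = \bigoplus_n Y_n$ into closed subspaces with $\dim Y_n = \dim E_n(X)$ (possible because $V$ yields $\sum_n \dim E_n(X) = \dim X$), pick unitaries $U_n \colon E_n(X) \to Y_n$, assemble them into a Hilbert-space isomorphism $U \colon H \to X$, and set $T := UV \in \BL(X)$. For $x \in E_n(X)$, $Vx$ is supported in the $n$-th summand, so $T(E_n(X)) = Y_n$, giving the desired orthogonal family with dense span.

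\textbf{Main obstacle.} The only nontrivial technical point is identifying $TE_nT^{-1}$ with $Q_n$ in the forward direction: this uses crucially both the algebraic relation $E_n E_m = \delta_{nm} E_n$ (coming from the spectral calculus at isolated eigenvalues) and both parts of the Riesz-basis-of-subspaces definition, namely orthogonality of the $TE_n(X)$ and density of their span. Everything else is a Cauchy estimate and standard Hilbert-space bookkeeping.
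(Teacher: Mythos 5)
Your argument is correct. Note that the paper itself contains no proof of this proposition: it is imported from Guo and Zwart \cite{GuoZwa01}, where the frame inequality is essentially taken as (an equivalent form of) the definition of a Riesz basis of subspaces, so there is no in-paper argument to compare against. Your two directions are the standard way to establish the equivalence: in the forward direction you identify $P_n:=TE_nT^{-1}$ with the orthogonal projection $Q_n$ onto $TE_n(X)$ and apply Pythagoras, and the one genuinely delicate step there --- that $P_n$ annihilates $(TE_n(X))^{\perp}$, which requires both $E_nE_m=\delta_{nm}E_n$ and the density of $\Span (TE_n(X))_{n\in\N}$ --- is handled correctly; in the converse direction the frame bounds make $Vx=(E_nx)_n$ an isomorphism onto $\bigoplus_{n\in\N}E_n(X)$, and composing with a block unitary onto a prechosen orthogonal decomposition of $X$ produces the required $T$. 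The only point worth making explicit is that $E_nE_m=\delta_{nm}E_n$ is a property of Riesz projections associated with disjoint compact spectral sets (here available because the compact resolvent makes each $s_n$ isolated with finite multiplicity), and that the finite dimensionality of the $E_n(X)$ together with separability of $X$ guarantees the existence of the orthogonal decomposition $X=\bigoplus_n Y_n$ with $\dim Y_n=\dim E_n(X)$.
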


Lemma \ref{Riesz-resolvent}, Lemma \ref{pert-resolvent}, Proposition \ref{A=S+N} and Proposition \ref{rieszHans} will be useful for the proof of the main result.
\begin {lemma}\label{Riesz-resolvent}
 Let $A$ be a  \rieszp and $M:=\|N\|$, where $N$ is given by Definition \ref{defDRSO}. Then there exists $C>0$ such that \text{for } $s\in \rho(A)$ with $d(s,\sigma(A))>M$ we have
\begin{equation}\label{riesz-resolvent-ineq}
\|(s-A)^{-1}\|\leq\frac{C}{d(s,\sigma(A))},
\end{equation}
where $d(s,\sigma(A))$ denotes the distance from $s$ to the 
spectrum of $A$.
\end {lemma}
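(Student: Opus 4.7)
The plan is to exploit the block-diagonal structure of a \rieszp $A$ with respect to the Riesz basis of subspaces $(E_n X)_{n\in\N}$: on each block, $A$ acts as the sum of a scalar and a bounded nilpotent of uniform order, and Proposition~\ref{prop-RBP} will convert blockwise estimates into a global operator bound.

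First, since each spectral projection $E_n$ commutes with $A$, it also commutes with the resolvent $(s-A)^{-1}$ for every $s\in\rho(A)$. Setting $y:=(s-A)^{-1}x$ and $y_n:=E_n y=(s-A)^{-1}E_n x$, the lower Riesz bound from Proposition~\ref{prop-RBP} reduces the problem to a blockwise estimate via $\|y\|^2\leq \frac{1}{m_1}\sum_n\|y_n\|^2$.

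Next, let $p$ denote the nilpotency index of $N$. Using the decomposition $N=\sum_n N_n$ together with $E_nE_m=\delta_{nm}E_n$, one checks inductively that $N^k E_n = N_n^k$ for all $k\geq 1$, so each $N_n$ is nilpotent of index at most $p$ and $\|N_n^k\|\leq M^k\|E_n\|\leq M^k\sqrt{m_2}$, where the last inequality follows by applying Proposition~\ref{prop-RBP} to the projection $E_n$. On $E_nX$, the operator $s-A$ equals $(s-s_n)I - N_n|_{E_nX}$. The hypothesis $d(s,\sigma(A))>M$ gives $|s-s_n|>M\geq\|N\|$, so the finite Neumann series
\begin{equation*}
(s-A|_{E_nX})^{-1} \;=\; \sum_{k=0}^{p-1}\frac{N_n^k|_{E_nX}}{(s-s_n)^{k+1}}
\end{equation*}
converges absolutely, and its norm is bounded by $\frac{\sqrt{m_2}}{|s-s_n|}\sum_{k=0}^{p-1}(M/|s-s_n|)^k \leq \frac{p\sqrt{m_2}}{d(s,\sigma(A))}$.

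Combining the two ingredients yields $\|y_n\|\leq \frac{p\sqrt{m_2}}{d(s,\sigma(A))}\|E_n x\|$, and inserting this together with the upper Riesz bound $\sum_n\|E_n x\|^2\leq m_2\|x\|^2$ gives $\|y\|^2 \leq \frac{p^2 m_2^2}{m_1\, d(s,\sigma(A))^2}\|x\|^2$, so \eqref{riesz-resolvent-ineq} holds with $C=pm_2/\sqrt{m_1}$. The only non-routine step is the bookkeeping identity $N^k E_n = N_n^k$, which is what pins down uniform nilpotency and lets the global bound $M=\|N\|$ control every block; after that everything reduces to a geometric-series estimate and a single application of Proposition~\ref{prop-RBP} on each side.
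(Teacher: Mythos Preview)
Your proof is correct and follows essentially the same approach as the paper: decompose the resolvent blockwise via the spectral projections $E_n$, expand each block as a Neumann series in $N_n$, and use Proposition~\ref{prop-RBP} to pass between blockwise and global norms. Your exploitation of the uniform nilpotency index $p$ (via $N^kE_n=N_n^k$) to truncate the Neumann series at $p$ terms is in fact slightly cleaner than the paper's bound by the full geometric series $\sum_{j\geq 0}(M/d(s,\sigma(A)))^j$, since the latter does not directly give a constant independent of $s$ as $d(s,\sigma(A))\to M^+$.
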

\begin{proof}
Let $\sigma(A)=(s_n)_{n\in \mathbb N}$, $E_n, N_n, N$ as in Definition \ref{defDRSO} 
%We define  $M:=\norm{N}$ and 
and $s\in \rho(A)$ with $d(s,\sigma(A))>M$ be arbitrary. 
{ Since $E_n$ is a spectral projection, it commutes with $A$ and the resolvent of $A$. }
By the definition of a discrete spectral operator we have
\[ s-A =\sum_{n=1}^\infty   ((s-s_n)-N_n)E_n\]
and identifying $(s-s_n)-N_n$ with the matrix corresponding to $((s-s_n)-N_n)|_{E_n X}$ we obtain
\[ 
  (s-A)^{-1} =\sum_{n=1}^\infty  ((s-s_n)-N_n)^{-1}E_n.
\]
{ Then it holds for $x \in X$ 
\begin{align*}
 \|(s-A)^{-1}x\|^2&
 =\norms{\sum_{n\in \N}((s-s_n) -N_n)^{-1}E_nx}^2\\ 
%&\leq\frac{1}{\tilde{m_1}}\sum_{n\in \N}\normk{((s-s_n) -N_n)^{-1}E_nx}^2 \\
&\leq\sum_{n\in \N}\normk{((s-s_n) -N_n)^{-1}E_nx}^2 \\
&
\leq  \sup_{n\in \N} \norm{((s-s_n) -N_n)^{-1}}^2\sum_{n\in \N}\normk{{E}_nx}^2\\ &
\leq {m_2}\sup_{n\in \N}  \norm{((s-s_n) -N_n)^{-1}}^2\norm{x}^2,
\end{align*}}
where $m_2$ is the positive constant of the Riesz basis of subspaces $(E_n)_{n\in \N}$, c.f.~Lemma \ref{prop-RBP}.
Using 
\begin{equation*}
 ((s-s_n)-N_n)=(s-s_n)\left(I-\frac{1}{(s-s_n)}N_n\right)
\end{equation*}
we get
\begin{align*}((s-s_n)-N_n)^{-1}&=\frac{1}{(s-s_n)}\sum_{j=0}^{k_n}\frac{1}{
(s-s_n)^j}N_n^j,
\end{align*}
where $k_n$ denotes the degree of nilpotency of $N_n$.
Thus, for $s\in \rho(A)$ such that $d(s,\sigma(A))>M$, it holds
\begin{align*}\norm{((s-s_n)-N_n)^{-1}}&\leq \sum_{j=1}^{k_n+1} \frac{1}{\abs{(s-s_n)}^j}M^{j-1}\leq  \sum_{j=1}^{k_n+1} 
\frac{1}{d(s,\sigma(A))^j}M^{j-1}\\&\leq  \frac{1}{d(s,\sigma(A))}\sum_{j=0}^{\infty} 
\left(\frac{M}{d(s,\sigma(A))}\right)^j, %\leq \frac{C}{d(s,\sigma(A))},
\end{align*}
which concludes the proof.
\end{proof}

\begin{lemma}\label{pert-resolvent}
Let $A$ be a \riesz and generator of a $C_0$-semigroup, and $P\in \BL(X)$. Then there exist constants $K,M>0$ such that for $s \in \rho(A) $ with $d(s,\sigma(A))>M$ we have $s \in \rho(A+P)$ and
\begin{equation*}
\norm{(s-(A+P))^{-1}}\leq \frac{K}{d(s,\sigma(A))}.
\end{equation*}
\end{lemma}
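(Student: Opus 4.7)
The natural plan is a Neumann series perturbation argument, using Lemma \ref{Riesz-resolvent} as the input bound. I would factor
\[
 s-(A+P) = (s-A)\bigl(I - (s-A)^{-1}P\bigr),
\]
so that invertibility of $s-(A+P)$ follows once $\|(s-A)^{-1}P\| < 1$, with the identity
\[
 (s-(A+P))^{-1} = \bigl(I - (s-A)^{-1}P\bigr)^{-1}(s-A)^{-1}.
\]

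Next I would use Lemma \ref{Riesz-resolvent} to control the size of $(s-A)^{-1}$. Let $M_0 := \|N\|$ and let $C > 0$ be the constant supplied by that lemma, so that $\|(s-A)^{-1}\| \le C/d(s,\sigma(A))$ whenever $d(s,\sigma(A)) > M_0$. Choose
\[
 M := \max\bigl(M_0,\; 2C\|P\|\bigr).
\]
Then for any $s \in \rho(A)$ with $d(s,\sigma(A)) > M$, we have
\[
 \|(s-A)^{-1}P\| \le \frac{C\|P\|}{d(s,\sigma(A))} \le \tfrac{1}{2},
\]
so the Neumann series for $(I - (s-A)^{-1}P)^{-1}$ converges with norm bounded by $2$. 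In particular $s \in \rho(A+P)$.

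Finally, combining these two estimates yields
\[
 \|(s-(A+P))^{-1}\| \le \bigl\|(I-(s-A)^{-1}P)^{-1}\bigr\| \cdot \|(s-A)^{-1}\| \le \frac{2C}{d(s,\sigma(A))},
\]
so the claim holds with $K := 2C$ and the $M$ chosen above. There is no real obstacle here: the whole argument is a standard bounded perturbation of the resolvent, and the only ingredient specific to the \riesz setting is the quantitative bound from Lemma \ref{Riesz-resolvent}, which is already available. The assumption that $A$ generates a $C_0$-semigroup is not actually needed for this step, but is harmless and will be used elsewhere in the paper.
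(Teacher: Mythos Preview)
Your proof is correct and matches the paper's own argument almost verbatim: the paper uses the same Neumann series perturbation with the same choice $M=\max\{M_1,2C\|P\|\}$ and arrives at the same bound $K=2C$. The only cosmetic difference is that the paper factors $s-(A+P)=(I-P(s-A)^{-1})(s-A)$ on the left rather than on the right, which of course makes no difference.
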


\begin{proof}
By Lemma \ref{Riesz-resolvent}  there exists $M_1,C>0$ such that 
\begin{equation*}
\|(s-A)^{-1}\|\leq\frac{C}{d(s,\sigma(A))},
\end{equation*}
for  $s\in \rho(A)$ with $d(s,\sigma(A))>M_1$. Set $M:=\max\{ M_1,2 \|P\|C\}$. Let $s\in \rho(A)$ with $d(s,\sigma(A))>M$. Then $I-P(s-A)^{-1}$ is invertible and we obtain
\begin{align*}
 \norm{(s-(A+P))^{-1}}&= \|(s-A)^{-1}[I-P(s-A)^{-1}]^{-1}\|\\
 %&\leq \norm{(s-A)^{-1}}\norm{[I-P(s-A)^{-1}]^{-1}}\\
&\leq \frac{C}{d(s,\sigma(A))}\frac{1}{1-\norm{P}\norm{(s-A)^{-1}}}\\
&\leq \frac{2C}{d(s,\sigma(A))}.
\end{align*}
\end{proof}

\begin{proposition}\cite{DunSch71}\label{A=S+N}
Let $A$ be an operator with $\sigma(A)=(s_n)_{n\in\N}$ such that the family of spectral subspaces is a Riesz basis of subspaces of $X$. Then $A$ has the representation $A=S+N$,
where the scalar part $S$ is defined as
\begin{align*}
Sx&:=\sum_{n\in \N}s_n E_nx,\\
\D(S)&=\{x\in X\mid \sum_{n=1}^{\infty}\normk{s_n E_nx}^2 < \infty\},
\end{align*} 
and
$
N_n:=N E_n=(A-s_n)E_n$.
Furthermore, $N_n$ is quasi-nilpotent, i.e., $\sigma(N_n)=\{0\}$ for all $n\in \N$.
\end{proposition}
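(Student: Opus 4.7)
The plan is to exploit the finite-dimensionality of each spectral subspace $E_n(X)$, so that on every piece $A$ admits an elementary Jordan decomposition, and then to assemble these pieces using the Riesz basis of subspaces structure provided by Proposition \ref{prop-RBP}.

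First I would note that compactness of the resolvent forces each $E_n$ to be a finite-rank projection, and that standard spectral calculus applied to the contour integral defining $E_n$ yields $E_n(X)\subset\D(A)$, the commutation $AE_n x = E_n Ax$ on $\D(A)$, and $\sigma(A|_{E_n(X)})=\{s_n\}$. Consequently $A|_{E_n(X)}$ is a finite matrix having only the eigenvalue $s_n$, so by the Jordan normal form $J_n := (A-s_n)|_{E_n(X)}$ is nilpotent of some degree $k_n$. Setting $N_n := (A-s_n)E_n \in \BL(X)$, we obtain a finite-rank operator whose range sits in $E_n(X)$, which coincides with $J_n$ there and vanishes on every $E_k(X)$ with $k\neq n$ since $E_nE_k=0$. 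To get $\sigma(N_n)=\{0\}$ I would invoke the standard observation that for a bounded operator whose range lies in a finite-dimensional invariant subspace $V$ one has $\sigma(N_n)\setminus\{0\}=\sigma(N_n|_V)\setminus\{0\}$; nilpotency of $J_n$ then yields quasi-nilpotency of $N_n$.

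For the scalar part, I would apply Proposition \ref{prop-RBP}: the Riesz basis estimates extend to the statement that for any sequence $(y_n)$ with $y_n\in E_n(X)$, the series $\sum_n y_n$ converges in $X$ if and only if $\sum_n\|y_n\|^2<\infty$. Taking $y_n := s_n E_n x$ shows that the formula for $S$ is meaningful precisely on the stated $\D(S)$, which is dense because it contains every vector with only finitely many nonzero spectral components. The decomposition $A=S+N$ is then read off block-wise: for $x\in\D(A)$ and every $n$, the elementary identity
\[
E_n(Ax)=AE_n x = s_n E_n x + N_n x
\]
splits the $n$-th spectral component of $Ax$ canonically into its scalar and nilpotent parts, from which the relation $NE_n=N_n=(A-s_n)E_n$ follows directly.

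The main obstacle I anticipate is the precise interpretation of the composite operator $N$. Without the extra hypothesis $\sup_n\|N_n\|<\infty$ imposed in Definition \ref{defDRSO}, the global series $\sum_n N_n$ need not converge in operator norm, so $N$ is not a priori defined as a single globally bounded operator; rather, it must be understood block-wise via its defining identity $NE_n=N_n$ on each spectral subspace. With that reading the representation $A=S+N$ holds on $\D(A)$, and the quasi-nilpotency of each $N_n$ already established above supplies the final assertion of the proposition.
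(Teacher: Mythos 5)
The paper gives no proof of this proposition at all; it is quoted from Dunford--Schwartz, so there is no in-paper argument to compare against, and your reconstruction must be judged on its own. It is essentially sound, and it matches the way the paper actually uses the result (inside Proposition \ref{rieszHans}, where $A$ has compact resolvent and the boundedness of $N$ is proved separately afterwards). Two remarks. First, you derive quasi-nilpotency of $N_n$ from the Jordan normal form on a finite-dimensional $E_n(X)$, but compactness of the resolvent is not among the hypotheses of the proposition as stated --- only a denumerable spectrum and the Riesz-basis-of-subspaces property are assumed, so $E_n$ could in principle have infinite rank. The cleaner and fully general route is the one you already have half of: the Riesz projection gives $E_n(X)\subset\D(A)$, $A|_{E_n(X)}$ bounded with $\sigma\bigl(A|_{E_n(X)}\bigr)=\{s_n\}$, hence $\sigma\bigl((A-s_n)|_{E_n(X)}\bigr)=\{0\}$ by the spectral mapping theorem for polynomials, and your extension argument from the invariant range to all of $X$ then yields $\sigma(N_n)=\{0\}$ without any finite-rank input (in the finite-rank case this specializes to genuine nilpotency, which is what Proposition \ref{rieszHans} later exploits). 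Second, your blockwise identity $E_n(Ax)=s_nE_nx+N_nx$ is correct, but summing it to get $Ax=Sx+Nx$ on all of $\D(A)$ tacitly requires $\D(A)\subseteq\D(S)$, i.e.\ $\sum_n\|s_nE_nx\|^2<\infty$ for $x\in\D(A)$; from $s_nE_nx=E_nAx-N_nE_nx$ this follows only if $\sup_n\|N_n\|<\infty$, which is exactly the hypothesis you note is unavailable at this stage. Without it the decomposition should be read as $N:=A-S$ on $\D(A)\cap\D(S)$ together with the blockwise relations $NE_n=N_n=(A-s_n)E_n$ --- which is the reading you propose, and it is the one consistent with how the paper subsequently establishes boundedness of $N$. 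So the proposal is acceptable, with the finite-rank shortcut being the only point I would repair.
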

\begin{proposition}\label{rieszHans}
Let $A$ be a generator of $C_0$-group on $X$ with compact resolvent. The eigenvalues are counted with algebraic multiplicity. %re denoted by $\{s_n\}_{n\in \N}$.
If the following conditions are both fulfilled, 
\begin{enumerate}[label={\Roman*}.]
 \item \label{cond1} The span of the (generalized) eigenvectors form a dense set in $X$; 
 \item \label{cond2} The eigenvalues  can be decomposed into finitely many sets each having a uniform gap;
\end{enumerate}
then $A$ is a \riesz.
\end{proposition}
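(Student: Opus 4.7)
The compact-resolvent hypothesis gives $\sigma(A)=\sigma_p(A)=(s_n)_{n\in\N}$, a discrete set of isolated eigenvalues of finite algebraic multiplicity. For each $n$ the Riesz projection $E_n$ onto the generalized eigenspace of $s_n$ is finite rank and commutes with $A$; on the finite-dimensional invariant subspace $E_n(X)$ the operator $A-s_n$ is nilpotent. Setting $N_n:=(A-s_n)E_n$ immediately yields $AE_n=(s_n+N_n)E_n$ with $N_n$ nilpotent, handling condition~(1) of Definition~\ref{defDRSO}. The $C_0$-group hypothesis furnishes $\|T(t)\|\leq Me^{\omega|t|}$ for some $M\geq 1$, $\omega\geq 0$, so $\sigma(A)$ lies in the strip $\{s:|\Re s|\leq\omega\}$; this will be essential below.

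\textbf{Riesz basis of subspaces (main step).} By Proposition~\ref{prop-RBP} condition~(2) reduces to the two-sided inequality $m_1\|x\|^2\leq\sum_n\|E_nx\|^2\leq m_2\|x\|^2$. My plan is to invoke condition~II to split $\N=J_1\cup\dots\cup J_K$ with $K$ finite and each $(s_n)_{n\in J_k}$ of uniform gap $\delta_k>0$; since $K$ is finite, an estimate on each block combines into the global one. Within a single block of gap $\delta$, I enclose each $s_n$ in a box $R_n$ whose vertical sides lie on $\Re s=\pm c$ for a fixed $c>\omega$, with horizontal sides chosen midway between consecutive eigenvalues (possible thanks to the uniform gap), so the $R_n$'s are pairwise disjoint and $E_n x=\frac{1}{2\pi\iu}\oint_{\partial R_n}(s-A)^{-1}x\,ds$. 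The decisive analytic input is a Plancherel bound: from $(c+\iu\tau-A)^{-1}x=\int_0^\infty e^{-(c+\iu\tau)t}T(t)x\,dt$ and Parseval,
\[
\int_{-\infty}^{\infty}\|(c+\iu\tau-A)^{-1}x\|^2\,d\tau\leq\frac{\pi M^2}{c-\omega}\|x\|^2,
\]
with a symmetric bound on $\Re s=-c$ via the backward group $T(-t)$. Cauchy--Schwarz on each $\partial R_n$ combined with the disjointness of the boxes yields the upper Riesz inequality. The same argument applied to $A^*$ (which generates the adjoint $C_0$-group with conjugate spectrum and so also satisfies the hypotheses) produces the corresponding upper bound for $(E_n^*)_n$; combined with the completeness condition~I, a standard biorthogonality/duality argument then supplies the lower Riesz inequality, establishing the Riesz basis of subspaces.

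\textbf{Conclusion and main obstacle.} Once the Riesz basis of subspaces is in place, Proposition~\ref{A=S+N} delivers the decomposition $A=S+N$ with $N=\sum_n N_n$ and $\sigma(N_n)=\{0\}$ for every $n$. Condition~(3) then requires boundedness of $N$, which I obtain via Cauchy estimates on the entire map $z\mapsto e^{N_n z}E_n$ together with the restriction of the group bound to each invariant subspace $E_n(X)$ and the automatic bound $\sup_n\|E_n\|<\infty$ stemming from the Riesz basis property. The chief obstacle I anticipate is the quantitative Riesz basis inequality: combining the Plancherel bounds on $\Re s=\pm c$ with the contour-integral representation of $E_n$ without double counting, bounding the horizontal-side contributions of $\partial R_n$ uniformly, and carrying through the duality step on $A^*$ to obtain the lower bound. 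The uniform gap from~(II) is the geometric lever that makes the boxes disjoint, and the strip confinement from the $C_0$-group hypothesis is precisely what unlocks the Plancherel estimate in the first place.
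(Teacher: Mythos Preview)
Your strategy for the Riesz-basis step is the right circle of ideas and essentially what lies behind \cite{Zwa10}, which the paper simply invokes. But there is a concrete gap in your contour construction. When you fix a block $J_k$ and place the horizontal sides of $R_n$ midway between consecutive eigenvalues \emph{of that block}, nothing prevents eigenvalues from the other blocks $J_{k'}$ from lying inside $R_n$: condition~II allows eigenvalues from different blocks to be arbitrarily close (take e.g.\ $J_1=(in)_{n\in\Z}$ and $J_2=(i(n+\eps_n))_{n\in\Z}$ with $\eps_n\to 0$). Hence $\tfrac{1}{2\pi\iu}\oint_{\partial R_n}(s-A)^{-1}x\,ds$ is in general a sum of several $E_m x$, not $E_n x$, and the identity you write down fails. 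Summing estimates block by block does not rescue this, because the individual $E_n$ you want are entangled across blocks. The standard remedy is to first cluster nearby eigenvalues and prove the Riesz-basis inequality for the cluster projections via your Plancherel argument, and only then descend to the individual $E_n$ using that clusters have uniformly bounded dimension; both steps require more than your sketch supplies. You flag the horizontal-side contributions as the main obstacle, but this block-interaction issue is logically prior and is where your argument actually breaks.

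For condition~(3) you address only boundedness of $N=\sum_n N_n$, not its nilpotency; the latter follows once you observe that condition~II forces the algebraic multiplicities (hence the nilpotency indices $k_n$) to be at most $K$, so $N^K=0$. Your route to boundedness via the group bound on $E_n(X)$ can be made to work, but not quite by ``Cauchy estimates'': the bound $\|e^{tN_n}\|\le Me^{2\omega|t|}$ is only available for real $t$, so one really uses that $e^{tN_n}$ is a polynomial in $t$ of degree $<K$ together with Lagrange interpolation at $K$ real nodes to extract a uniform bound on $\|N_n\|$. The paper proceeds entirely differently here: after shifting so that $A$ generates an exponentially stable group, it invokes the Lyapunov equation $\langle Ax,Lx\rangle+\langle x,LAx\rangle=-\|x\|^2$ with a bounded positive $L$, restricts to each $E_n(X)$ to obtain $N_n^*L_n+L_nN_n=-I+r_nL_n$, and then bounds $\|N_n\|$ uniformly by an induction that multiplies this identity from both sides by powers of $N_n$ and $N_n^*$.
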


\begin{proof}
By \cite[Theorem 1.1 and Theorem 1.6]{Zwa10} it follows that 
%the eigenvectors form a Riesz basis on $X$ or 
the family of spectral subspaces is a Riesz basis of subspaces of $X$. 

Thus, $A$ has the  representation $A=S+N$, c.f.\ Proposition \ref{A=S+N}
 %is a spectral operator in the sense of Dunford and Schwartz, c.f. \cite[Definition XVIII.2.1]{DunSch71} with 
%the representation $A=S+N$, c.f. \cite[Definition XVIII.2.27]{DunSch71}, 
where $S$ denotes the scalar part of the spectral 
operator $A$ and $N:=\sum_{n\in \N} {N}_n$, where $N_n=(A-s_n){E}_n$ and $N_n$ is quasi-nilpotent. 
%In general, the operator $N$ is not bounded for unbounded spectral operators $A$.
To prove that $A$ is a discrete Riesz spectral operator %as defined in Definition \ref{defDRSO}, 
it remains to prove hat $N$ is bounded and nilpotent. 

We can identify  $N_n$ with a square matrix  corresponding to $N_n|_{E_nX}$ and thus $N_n$ is bounded and nilpotent.

Since 
% the multiplicity of 
the eigenvalues  $A$ can be decomposed into finitely many sets each having a uniform gap, $N$ is nilpotent.
%WARUM N_i quasi-nilpotent -> nach Dunford Schwartz! Remark nach Definition XVIII.2.27!
% Then the spectral projections $P_i$ commute with $A$ and $A*$. WHY? WEIL SPEKTRALPROJEKTIONEN; DEFDefinition XVIII.2.1.

Finally, we verify that $N$ is bounded.
%\marginpar{koennen die kis immer weiter wachsen dann N nicht nilpotent}
 Without loss of generality we assume that $A$ generates an exponentially stable
%WHY????? What if not??! Then switch the eigenvalues to the left thus that they are all les zero. But this does not change Ni 
$C_0$-group. Then by \cite{LouWex83}, there exists  an invertible and positive operator $L\in \BL(X)$ such that
%.the system with dynamic described by $A$ and observability operator $I$ is exactly observable, c.f. \cite{LouWex83}.
%Thus, 
\begin{equation}\label{Lya}
\<Ax, Lx\>+ \<x, L A x \>=-\<x,x\> \qquad \forall x\in \D(A).
\end{equation}
%for some bounded and boundedly invertible and positive $L\in \BL(X)$. 

 We define $A_n:=AE_n=(s_n + N_n)E_n$, where $E_n$ denotes the $n$th spectral projection, and we identify $A_n$ and 
$N_n$ with the corresponding matrices on $E_n X$.
From now on we fix $n$. Then we get for $x\in X$
%Multiplying \eqref{Lya} from the left with $\tilde{E}_i^*$ and from the right with $\tilde{E}_i$ yields for all $i\in \N$
\begin{align}
\<AE_nx,  LE_nx\> + \<E_nx,L AE_nx\> &=-\<E_nx,E_nx\> \nonumber
\end{align}
or equivalently
\begin{align}  % \<x,A_i^* L\tilde{E}_i x\> + \<x,\tilde{E}_i^*L A_ix\> &=-\<x\tilde{E}_i^*\tilde{E}_i x\>\nonumber\\
%\Leftrightarrow \< x,A_i^*\tilde{E}_i^* L\tilde{E}_ix\> +\<x, \tilde{E}_i^*L\tilde{E}_i A_ix\> &=-\<x,\tilde{E}_i^*\tilde{E}_ix\> \nonumber\\
  \<A_nE_nx, L_nE_nx\> +\<E_nx, L_nA_n E_nx\> &=-\<E_nx,E_nx\>, \label{lyai}
\end{align}
where $L_n:=E_n^* LE_n=L_n^*$. As $L$ is self-adjoint, $L_n$ is  self-adjoint as well. Again we identify $L_n$ with the corresponding matrix on 
$E_n X$ and obtain on $E_n X$
%Then \eqref{lyai} and Proposition \ref{prop-RBP} implies
$$  (s_n + {N}_n)^* L_n  + L_n (s_n
	+ {N}_n) = - I.$$
Thus we have 
\begin{equation}\label{eq1}
  {N}_n^* L_n + L_n N_n  =-I+ r_n L_n  %\quad \forall  x\in \ran \tilde{E}_i
\end{equation}
with $r_n:=-2\Re s_n$. 
%\<x,L_i x\> real since Li selfadjoint
%We remind that $k_i$ denote the index of nilpotency of $N_i:=N\tilde{E}_i$.
Multiplying \eqref{eq1} from the right by $N_n^{k_n-j+1}$ and from the left by $(N_n^*)^{k_n-j}$ with $j=2,3,\ldots,k_n$
gives %for $\red{x\in X}$%\ran \tilde{E}_i$
\begin{align*}
 (N_i^*)^{k_n-j+1} L_n N_n^{k_n-j+1} + (N_n^*)^{k_n-j}L_nN_n^{k_n-j+2}& \\
 = -(N_n^*)^{k_n-j} N_n^{k_n-j+1}  &+ r_n ( N_n^*)^{k_n-j} L_n N_n^{k_n-j+1} 
\end{align*}
 and thus it holds
 \begin{align}
\| L_n^{1/2} N_n^{k_n-j+1}\|^2 \leq& \| N_n^{k_n-j}\|\|N_n^{k_n-j+1}\| +  \abs{r_n} \norm{L_n}\|N_n^{k_n-j}\| \|N_n^{k_n-j+1}\|\nonumber\\ &+\|N_n^{k_n-j}\| \|L_n\| \|N_n^{k_n-j+2}\|.\label{eqNi}
\end{align}
Since $L_n$ is boundedly invertible on $E_n X$, we get 
\begin{align}\label{Linv}
m\| N_n^{k_n-j+1} \|^2 \leq \| L_n^{1/2} N_n^{k_n-j+1} \|^2 \text{ for some } m \text{ independent of }n.
\end{align}
For $j=2$ we use $N_n^{k_n}=0$ and 
%thus $\norm{N_i^{k_i-2}x}\norm{L_i}\norm{N_i^{k_i-2+2}x}=0$, we divide the inequality \eqref{eqNi}by $\norm{N_i^{k_i-2+1}x}$, {\blue if $\norm{N_i^{k_i-2+1}x}\neq 0$,} and
obtain 
\begin{align*}
m\| N_n^{k_n-1} \| \leq \|N_n^{k_n-2}\| +  \abs{r_n} \norm{L_n}\|N_n^{k_n-2}\|.
\end{align*}
Since $A$ is the generator of a $C_0$-group, we have  $R:=\sup_{n\in \N}{\abs{r_n}}<\infty$. Moreover, with $M:= \sup_{n\in \N}\|L_n\|<\infty$ and $C:=1+RM$, this implies 
 \begin{equation}\label{chainNi}
  \|N_n^{k_n-1} \|\leq \frac{C}{m} \| N_n^{k_n-2}\|.
\end{equation}
For $j=3,\ldots,k_n$ we get using \eqref{Linv}, \eqref{eqNi}, and \eqref{chainNi} and  by induction over $j$
\begin{align}\label{ineqNi}
\|N_n^{k_n-j+1}\| &\leq \|N_n^{k_n-j}\| \sum_{l=1}^{j-1}\frac{C}{m^l}M^{l-1}.
%\norm{N_i^{k_i-j}x} +  \abs{r_i} \norm{L_i}\norm{N_i^{k_i-j}x}+c_{j-1}\norm{N_i^{k_i-j}x}\norm{L_i}.
\end{align}
%Using $\norm{L_i}\leq M$ independent of $i$, this implies 
 %\begin{equation}\label{chaineNi}
  %\norm{N_i^{k_i-j+1} x} \leq \norm{N_i^{k_i-j} x}(1+  \abs{r_i} M + c_{j-1} M) \quad \text{with constants } c_j>0  
%\end{equation}
In particular, for $j=k_n$ and using $k_n \leq K$, this implies
\begin{align*}
\|N_n\|\leq \sum_{l=1}^{k_n-1}\frac{C}{m^l}M^{l-1} \le \sum_{l=1}^{K-1}\frac{C}{m^l}M^{l-1}.
\end{align*}

Together with  Proposition \ref{prop-RBP} this implies that $N$ is bounded. 
%respectively.
\end{proof}
%------------------------------------------

 \section{Review on port-Hamiltonian systems}\label{sec:PHS}

The proof of our main result Theorem \ref{main} requires some methods and results from port-Hamiltonian control systems.
We define the \emph{(maximal) port-Hamiltonian operator} by
\begin{equation*}%\label{operatorA}
\frak Ax:= \left( P_1 \frac{d}{d\zeta} + P_0\right) ({\cal H}x), \qquad x\in {\cal D}(\frak A), 
\end{equation*}
 on $X:=L^2((0,1);\mathbb{C}^d)$  with the domain
$
%\label{domainA}
{\cal D}(\frak A) := \left\{ x\in X\mid  {\cal H}x\in H^{1}((0,1);\mathbb C^d) \right\}$.
We equip the linear port-Hamiltonian system \ref{eqn:pde0} with a boundary control $u$, that is, we consider
%Well-posedness is a fundamental property of port-Hamiltonian systems
%{\red
%of the form
\begin{align}
\frac{\partial x}{\partial t}(\zeta,t) &= \left( P_1
  \frac{\partial}{\partial \zeta} + P_0\right) ({\cal H}({\zeta})
x(\zeta,t)),\nonumber \\
x(\zeta,0) &= x_0(\zeta),\label{eqn:pde}\\
u(t)&= \begin{bmatrix} W_1 & W_0 \end{bmatrix} \begin{bmatrix} ({\cal H}x)(1,t) \\ ({\cal
         H}x)(0,t)\end{bmatrix}, \nonumber
\end{align}
where $\,\, \zeta \in [0,1]$ and $t\ge 0$ and the Assumption \ref{A1} is fulfilled.% }

\begin{definition}
 We call the port-Hamiltonian system \eqref{eqn:pde} a
  {\em well-posed control system} if $A$ generates a $C_0$-semigroup on $X$ and
  there exist
  $\tau >0$ and $m_\tau\ge 0$ such that for all $x_0 \in {\cal D}({\mathfrak A})$ and $u \in C^2([0,\tau];\mathbb{C}^d)$ 
with $u(0)= \begin{bsmallmatrix} ({\cal H}x_0)(1,0) \\ ({\cal
         H}x_0)(0,0)\end{bsmallmatrix}$ the classical solution $x$ of \eqref{eqn:pde} satisfies 
   \begin{align*}%\label{wps}
    \|x(\tau)\|_{X}^2 \leq
    m_\tau \left( \|x_0\|_{X}^2 + \int_0^{\tau} \|u(t)\|^2 dt  \right).
    \end{align*}
\end{definition}
\vspace{1ex}
There exists a rich literature on well-posed control systems, see e.g.~Staffans \cite{Staf05} and Tuscnak and Weiss 
\cite{TucWei09}. 
In general, it is not easy to show that a control system is well-posed. However, for the   
port-Hamiltonian control system \eqref{eqn:pde}
well-posedness is already satisfied if $A$ generates a $C_0$-semigroup, c.f. 
\cite[Theorem 3.3]{ZwaGorMasVil10} and \cite[Theorem 13.2.2]{JacZwa12}.

In the following, we assume that the 
port-Hamiltonian system \eqref{eqn:pde} is a well-posed control system.
 
Well-posedness implies that for every initial condition $x_0\in X$ and every $L^2$ control function $u$ the port-Hamiltonian control system has a unique  mild solution.

In order to define the mild solution we need to introduce some notation. Let $X_{-1}$ be the completion of $X$ with respect to the norm 
$ \|x\|_{X_{-1}}= \|(\beta  -A)^{-1}x\|_X$ 
for some $\beta $ in the resolvent set $\rho(A)$ of $A$. This implies,
$X\subset  X_{-1}$
and $X$ is continuously embedded and dense in $X_{-1}$. Moreover, let $(T(t))_{t\ge 0}$ be the $C_0$-semigroup generated by $A$ with growth bound $\omega_0(T)$. The $C_0$-semigroup  $(T(t))_{t\ge 0}$ extends uniquely to a $C_0$-semigroup 
$(T_{-1}(t))_{t\ge 0}$ on $X_{-1}$ whose generator $A_{-1}$, with domain equal to $X$, is an extension of $A$, see e.g.\ 
\cite{EngNag00}. 
%Followings \cite{TucWei09}, we identify $X_{-1}$ with the dual space of ${\cal D}(A^*)$ with respect to the pivot space 
%$X$, i.e, $X_{-1}={\cal D}(A^*)'$.

Well-posedness implies the existence of $\tilde B\in \BL(\C^d,X)$ with $\ran \tilde B\subset\D(\frak A)$ and $\frak A \tilde B \in \BL(\C^d,X)$ such that the unique mild solution of \eqref{eqn:pde} can be defined  by
\begin{equation*}
x(t)=T(t)x_0+ \int_0^t T_{-1}(t-s)(\frak A \widetilde B-A_{-1}\widetilde B) u(s)\, ds,
\end{equation*}
see \cite{JacZwa19}.

The port-Hamiltonian control system can be 
 
written equivalently in the standard control operator formulation
\begin{equation}\label{scf}\dot{x}(t)=A_{-1}x(t)+Bu(t) \quad  x(0)=x_0,\quad t\geq 0,\end{equation}
where
$B\in\BL(\C^d,X_{-1})$ given by \begin{equation}\label{B}B:=\frak A \widetilde 
B-A_{-1}\widetilde B.\end{equation} 
The operator $B$ is an \emph{admissible control operator for $(T(t))_{t\geq 0}$}, that is, 
\begin{equation*}
\Phi_{t_f}u:=\int_0^{t_f}T_{-1}(t_f-s)Bu(s)\;ds\in X
\end{equation*} 
for every $u\in L^2((0,t_f);\C^d)$. Admissibility  implies that the mild solution of \eqref{eqn:pde} satisfies $x\in C([0,t_f);X)$ for every initial condition $x_0\in X$ and every $u\in L^2((0,t_f);\C^d)$.

Well-posed port-Hamiltonian control systems are always exactly controllable in finite time as the following proposition shows.

\begin{proposition}\cite{JacKai19} \label{control}
Every well-posed port-Hamiltonian control system \eqref{eqn:pde} is exactly controllable in finite time, that is, there exists a 
time $\tau >0$ such 
that for all $x_1 \in X$ there
exists a control function $u\in L^2((0,\tau);\mathbb{C}^d)$ such that the corresponding mild solution satisfies $x(0)=0$ 
and 
$x(\tau)=x_1$.
\end{proposition}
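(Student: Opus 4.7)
The strategy is to reduce the port-Hamiltonian system to a system of $d$ uncoupled transport equations (with bounded zero-order coupling), where controllability can be analyzed by the method of characteristics, and then to recover controllability of the original system by a perturbation argument.

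\emph{Step 1 (Diagonalization).} Using Assumption~\ref{A1}, set $\tilde x(\zeta,t) := S(\zeta)\mathcal{H}(\zeta)\,x(\zeta,t)$. Because $S,S^{-1},\Delta$ are $C^{1}$, the substitution turns \eqref{eqn:pde} into an equivalent system
\begin{equation*}
\frac{\partial \tilde x}{\partial t} = \Delta(\zeta)\frac{\partial \tilde x}{\partial \zeta} + C(\zeta)\tilde x,\qquad u(t) = \widetilde W_1 \tilde x(1,t) + \widetilde W_0 \tilde x(0,t),
\end{equation*}
with $C\in L^{\infty}$ bounded and $[\widetilde W_1\;\widetilde W_0]$ still of full row rank $d$. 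Split the components according to the sign of the eigenvalues of $\Delta$: write $\tilde x = (\tilde x^{+},\tilde x^{-})$, so that $\tilde x^{+}$ travels from $\zeta=1$ to $\zeta=0$ and $\tilde x^{-}$ travels from $\zeta=0$ to $\zeta=1$.

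\emph{Step 2 (Controllability of the decoupled transport part).} Drop the term $C\tilde x$ for the moment. Along each characteristic the equation is a pure transport, so the interior values of $\tilde x^{+}$ at time $\tau$ are determined by the boundary trace of $\tilde x^{+}(1,\cdot)$ on an interval of length at most $\tau_{+}$, and similarly $\tilde x^{-}(\tau,\cdot)$ is determined by $\tilde x^{-}(0,\cdot)$ on an interval of length at most $\tau_{-}$, where $\tau_{\pm}$ are the maximal transit times determined by $\Delta$. Hence for $\tau > \tau_{+}+\tau_{-}$, prescribing any target $x_1\in X$ is equivalent to prescribing the full vector of boundary traces $(\tilde x^{+}(1,\cdot),\tilde x^{-}(0,\cdot))\in L^{2}$ on a sufficiently long time interval. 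The boundary condition couples these traces linearly with the control $u(t)$ through $[\widetilde W_1\;\widetilde W_0]$: since this matrix has full row rank $d$ and the traces provide $d$ scalar degrees of freedom per unit time, elementary linear algebra shows that any combination of traces is attainable by a suitable $L^{2}$-control $u$. This gives exact controllability of the purely transport problem.

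\emph{Step 3 (Reinstating the zero-order term).} Using the Duhamel formula, the mild solution of the full system is obtained from that of the decoupled system via a Volterra-type perturbation by the bounded operator induced by $C(\zeta)$. One writes the input--to--state map $\Phi_\tau^{\mathrm{full}}$ as $\Phi_\tau^{\mathrm{free}} + K_\tau$ where $K_\tau$ is a Volterra operator in $\tau$; choosing $\tau$ larger (if necessary) and using surjectivity of $\Phi_\tau^{\mathrm{free}}$ together with the smallness of iterated Volterra kernels, one concludes surjectivity of $\Phi_\tau^{\mathrm{full}}$ onto $X$. Equivalently, by admissibility one may reformulate the statement as observability of the dual system and close the argument via a perturbation lemma for exact observability under bounded zero-order perturbations.

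\emph{Main obstacle.} The delicate point is Step 2: the boundary matrix $[\widetilde W_1\;\widetilde W_0]$ couples the incoming and outgoing characteristic components in a general (non-block-diagonal) way, so one cannot simply prescribe $\tilde x^{+}(1,\cdot)$ and $\tilde x^{-}(0,\cdot)$ independently from $u$. Showing that an arbitrary pair of $L^{2}$ boundary traces can be realized by a single $L^{2}$-control requires using the full row rank of $[\widetilde W_1\;\widetilde W_0]$ together with a careful choice of complementary boundary data on sub-intervals that exploit the characteristic travel times; this is the crux that must be executed carefully to keep the resulting $u$ in $L^{2}$.
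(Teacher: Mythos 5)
First, a point of comparison: the paper does not prove Proposition \ref{control} at all --- it is quoted verbatim from \cite{JacKai19} --- so there is no in-paper argument to measure you against. Your outline (diagonalize via $S$, analyse the decoupled transport system along characteristics, reinstate the zero-order coupling by a Volterra perturbation) is indeed the natural route and essentially the one taken in the cited reference. But what you have written is a plan, not a proof, and the decisive step is the one you yourself defer.

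The genuine gap is in Step 2. Full row rank of $[\widetilde W_1\;\widetilde W_0]$ plus ``$d$ scalar degrees of freedom per unit time'' is not enough to conclude that an arbitrary pair of incoming traces $(\tilde x^{+}(1,\cdot),\tilde x^{-}(0,\cdot))$ is reachable from $u$: the boundary relation reads $u(t)=M_{\mathrm{in}}w_{\mathrm{in}}(t)+M_{\mathrm{out}}w_{\mathrm{out}}(t)$ with $w_{\mathrm{out}}$ a causal functional of past values of $w_{\mathrm{in}}$, and to solve for $w_{\mathrm{in}}$ you need the $d\times d$ block $M_{\mathrm{in}}$ acting on the incoming characteristics to be invertible. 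That invertibility is \emph{not} a consequence of the rank condition on the full $d\times 2d$ matrix; it is exactly the generation condition (the invertibility of the matrix $K$ in Section 5, cf.\ \cite[Theorem 13.3.1]{JacZwa12}), i.e.\ it is here that well-posedness must be invoked, and your sketch never does so. Alternatively one can run the argument backwards --- prescribe $w_{\mathrm{in}}$ from the target $x_1$ by tracing characteristics back from $t=\tau$, compute $w_{\mathrm{out}}$ causally, and \emph{define} $u$ by the boundary relation --- but then one must verify that the function so constructed coincides with the unique mild solution of \eqref{eqn:pde}, which again uses well-posedness. Your closing paragraph concedes that this is ``the crux that must be executed carefully,'' which is to say it is missing. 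Step 3 is likewise only asserted: exact controllability is not preserved under bounded perturbation by ``smallness of iterated Volterra kernels'' chosen via a larger $\tau$ (the kernel does not shrink as $\tau$ grows); one has to carry out the Neumann-series/Volterra argument using the causal, eventually-nilpotent structure of the iterated kernels, or invoke and verify the hypotheses of a concrete perturbation result for exact observability of the dual system. Until Steps 2 and 3 are actually executed, the proposal does not establish the proposition.
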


As a consequence of exact controllability we obtain that the eigenspaces span the state space.

\begin{proposition}\label{dense}
Consider a well-posed port-Hamiltonian control system \eqref{eqn:pde} and assume that $A$
generates a 
$C_0$-group on $X$. Let $\sigma(A)=(s_n)_{n\in \N}$. Then
\begin{equation*}
 X=\overline{\mathrm{span}_{n\in\N}E((s_n))X},
\end{equation*}
where $E((s_n))$ is introduced in Definition \ref{sp}.
\end{proposition}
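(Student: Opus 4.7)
The strategy is to argue by contradiction: suppose $V:=\overline{\mathrm{span}_{n\in\N} E((s_n))X}$ is a proper subspace of $X$, pick $y\in V^{\perp}$ with $y\neq 0$, and derive a contradiction from the exact controllability of \eqref{eqn:pde} provided by Proposition~\ref{control}. Since $A$ has compact resolvent, each $E_nX$ is finite dimensional and $A$-invariant, and because $A$ generates a $C_0$-group the subspace $E_nX$ is actually invariant under $T(t)$ for every $t\in\R$. Hence $V$ is invariant under $T(t)$ for all $t\in\R$, and $V^{\perp}$ is invariant under $T(t)^*$. Moreover, for every $x\in X$ and every $n\in\N$, $\<E_n^* y,x\>=\<y,E_n x\>=0$, so $E_n^* y=0$ for all $n$. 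Since the only possible singularities of $s\mapsto (s-A^*)^{-1}y$ are isolated poles at the points $\bar s_n$, and the principal part at each pole is determined by $E_n^*y$ together with the nilpotent parts that annihilate $y$, these singularities are all removable and $(s-A^*)^{-1}y$ extends to an entire $X$-valued function of $s\in\C$.

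Next I would study the entire $\C^d$-valued function
\[
 f(s):=B^*(s-A^*)^{-1}y,
\]
where $B\in\BL(\C^d,X_{-1})$ is the admissible control operator from \eqref{B}. The group property of $T(t)$ yields resolvent estimates of the form $\|(s-A^*)^{-1}\|\le M/(|\Re s|-\omega_0)$ in both half-planes $\{\Re s>\omega_0\}$ and $\{\Re s<-\omega_0\}$, and admissibility of $B$ then gives $\|f(s)\|\to 0$ as $|\Re s|\to\infty$. Inside the remaining vertical strip $\{|\Re s|\le\omega_0\}$ one needs a polynomial-in-$|s|$ growth bound, which I would extract from the port-Hamiltonian structure by solving the eigenvalue equation via the fundamental matrix of the first-order ODE for $\H x$, yielding an entire characteristic function of finite exponential type. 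A Phragm\'en-Lindel\"of argument on each of the two half-strips then forces $f\equiv 0$.

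From the Laplace representation $f(s)=\int_0^\infty e^{-st}B^* T(t)^* y\,dt$, valid for $\Re s>\omega_0$, and uniqueness of the Laplace transform, $f\equiv 0$ implies $B^* T(t)^* y=0$ for a.e.\ $t\ge 0$. Exact controllability of \eqref{eqn:pde} on an interval $[0,\tau]$ is equivalent, by the standard controllability/observability duality for well-posed systems, to the exact observability inequality
\[
 c\|y\|^2\le \int_0^\tau \|B^* T(t)^* y\|^2\,dt,\qquad y\in X.
\]
Combining this with $B^* T(t)^* y\equiv 0$ gives $y=0$, contradicting $y\neq 0$, so $V=X$.

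The main obstacle I expect is controlling the growth of $f$ across the strip $\{|\Re s|\le\omega_0\}$: the admissibility estimate only controls $f$ in the two outer half-planes, and one has to exploit the explicit first-order ODE structure of the port-Hamiltonian operator to produce a polynomial bound on vertical lines inside the strip. Everything else---the group-invariance of the spectral subspaces, the removability of the singularities, and the passage from $f\equiv 0$ to $y=0$ via observability---is soft and relies only on general $C_0$-group theory together with well-posedness of the control system.
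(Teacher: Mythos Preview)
The paper's own proof is a one-line citation: it combines \cite[Lemma~7.3]{JacZwa99} with the exact controllability of Proposition~\ref{control}. That lemma is the abstract statement that for a $C_0$-group generator with compact resolvent, exact controllability of $(A,B)$ implies completeness of the generalized eigenspaces. Your proposal is precisely a reconstruction of that lemma's proof, so the route is the same---you are unpacking the black box rather than citing it.

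Your outline is correct, including the reduction to showing $f\equiv 0$ and the final appeal to exact observability. Two remarks on the details. First, the decay $\|f(s)\|\to 0$ as $\Re s\to -\infty$ does \emph{not} follow from admissibility of $B$ for $(T(t))_{t\ge 0}$ together with the group resolvent bound: $B^*$ is unbounded on $X$, so the $\BL(X)$-resolvent estimate in the left half-plane is not enough by itself. You need $B$ admissible for the reversed semigroup $(T(-t))_{t\ge 0}$. In the present setting this does hold, because $-A$ is again a port-Hamiltonian operator generating a $C_0$-semigroup, hence the control system \eqref{eqn:pde} is well-posed for $-A$ as well; but this step should be made explicit. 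Second, the strip estimate you flag as the main obstacle can indeed be handled in the port-Hamiltonian setting by writing the resolvent explicitly via the fundamental matrix $\Psi^s$ and Cramer's rule (cf.\ Lemma~\ref{lsgode} and Lemma~\ref{deteig}), which exhibits $f$ as a quotient of entire functions of exponential type whose poles have been removed, hence itself of exponential type; Phragm\'en--Lindel\"of and Liouville then conclude. This port-Hamiltonian-specific detour is unnecessary if one is content to quote the general lemma, but it is a legitimate alternative.
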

\begin{proof}
Follows  from \cite[Lemma 7.3]{JacZwa99} together with Proposition \ref{control}.
\end{proof}

%-------------------

\section{Proof of the Main Result} \label{secproof}

This section is devoted to the proof of Theorem  \ref{main}.
Let us first assume that one of the conditions of the theorem is satisfied. As the resolvent of $A$ is compact the spectrum consists of isolated eigenvalues only. That the eigenvalues lie in a strip parallel to the imaginary axis and that they can be decomposed into finitely many sets each having a uniform gap will be shown in the proof of the implication $2)\Rightarrow 1)$.
Finally, $\omega_0(A)= s(A)$ is implied by \cite[Theorem 2.12]{GuoZwa01}.

\subsection{Proof of the equivalence $2)\Leftrightarrow 3)$ of Theorem \ref{main}}

The operator $A$ generates a $C_0$-group, if and only if $A$ and $-A$ generates a $C_0$-semigroup, \cite[Section II.3.11]{EngNag00}. In \cite[Theorem 1.5]{JacMorZwa15} shows that $A$ is the generator of a 
$C_0$-semigroup if and only if $W_1\H(1)Z^+(1)\oplus W_0\H(0)Z^-(0)=\C^d$. Since $\D(-A)=\D(A)$ and $\sigma_p(P_1\H)=
- \sigma_p(-P_1\H)$ it follows that $-A$ generates a 
$C_0$-semigroup if and only 
if $W_1\H(1)Z^-(1)\oplus W_0\H(0)Z^+(0)=\C^d$.

\subsection{Proof of the implication $2)\Rightarrow 1)$ of Theorem \ref{main}}

The following lemma will be useful.
\begin{lemma}\label{lsgode}
Let $s\in \C$ and $P_1,P_0$ and $\H$ fulfil the condition for a port-Hamil\-to\-ni\-an operator in Assumption \ref{A1}. Then the  solutions of the system of ordinary differential equations
\begin{equation}\label{ode}
 sx(\zeta)= \left(P_1\frac{d}{d \zeta}+P_0 \right)(\H x)(\zeta), \quad \zeta \in [0,1],
\end{equation}
denoted by $x(\zeta)=\Psi^s(\zeta)x(0)$, satisfy
$$  \tilde M e^{-\abs{\Re s}\tilde c_0\zeta}\norm{v}\leq\norm{\Psi^s(\zeta)v} \leq M e^{\abs{\Re 
s}c_0\zeta}\norm{v},\quad v\in \C^d,\; \zeta \in [0,1],$$
%where $\Psi^s$ denotes the transition matrix of \eqref{ode} and
with constants $M,\tilde M>0$, and $\tilde c_0, c_0\geq 0$ independent of $s$ and $\zeta$.
 %Furthermore, it yields
% $$\norm{\Psi^s(1)}\leq c_2 e^{\abs{\Re s}c_3} \text{ for constants } c_2,c_3 >0.$$
\end{lemma}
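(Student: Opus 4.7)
The plan is to change variables so as to bring the principal part of the ODE into diagonal form, and then estimate the transformed state by a Gronwall-type argument. Since $P_1$ is a constant invertible matrix and $P_1\H=S^{-1}\Delta S$ with $S,S^{-1},\Delta\in C^1([0,1];\C^{d\times d})$, the identity $\H=P_1^{-1}S^{-1}\Delta S$ automatically upgrades $\H$ to a $C^1$ function (even though Assumption \ref{A1} only requires $L^\infty$). Introduce $w(\zeta):=S(\zeta)x(\zeta)$. The key algebraic identity $\H S^{-1}=P_1^{-1}S^{-1}\Delta$ allows one to compute $(\H x)'=P_1^{-1}\bigl[(S^{-1}\Delta)'w+S^{-1}\Delta w'\bigr]$ cleanly. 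Substituting into \eqref{ode} and multiplying by $\Delta^{-1}S$ from the left, a direct calculation produces
\begin{equation*}
w'(\zeta)=s\,\Delta^{-1}(\zeta)\,w(\zeta)+R(\zeta)\,w(\zeta),\qquad R(\zeta):=-\Delta^{-1}S(S^{-1}\Delta)'-\Delta^{-1}SP_0P_1^{-1}S^{-1}\Delta,
\end{equation*}
with $R$ continuous on $[0,1]$ and therefore uniformly bounded.

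The crucial structural observation is that $\Delta(\zeta)$ can be chosen real-valued diagonal. Indeed, $P_1\H(\zeta)$ is similar via $\H(\zeta)^{1/2}$ to the Hermitian matrix $\H(\zeta)^{1/2}P_1\H(\zeta)^{1/2}$, so its spectrum is real. Consequently $\langle\Delta^{-1}(\zeta)w,w\rangle\in\R$ for every $w\in\C^d$, and expanding $\frac{d}{d\zeta}\|w\|^2=2\Re\langle w',w\rangle$ yields
\begin{equation*}
-2(|\Re s|\kappa_1+\kappa_2)\|w\|^2\le\frac{d}{d\zeta}\|w(\zeta)\|^2\le 2(|\Re s|\kappa_1+\kappa_2)\|w\|^2,
\end{equation*}
where $\kappa_1:=\sup_{\zeta\in[0,1]}\|\Delta^{-1}(\zeta)\|$ and $\kappa_2:=\sup_{\zeta\in[0,1]}\|R(\zeta)\|$. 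This is the step at which $|\Re s|$, rather than $|s|$, is isolated, and it is the heart of the lemma.

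Applying Gronwall's inequality in both directions gives $e^{-(|\Re s|\kappa_1+\kappa_2)\zeta}\|w(0)\|\le\|w(\zeta)\|\le e^{(|\Re s|\kappa_1+\kappa_2)\zeta}\|w(0)\|$ on $[0,1]$. Transforming back via $x=S^{-1}w$, $w(0)=S(0)x(0)$, and using that $S,S^{-1}$ are continuous on the compact interval $[0,1]$ and hence uniformly bounded, one obtains the claimed two-sided estimate on $\Psi^s(\zeta)v=x(\zeta)$, with $c_0=\tilde c_0=\kappa_1$ and constants $M,\tilde M>0$ absorbing $e^{\pm\kappa_2}$ together with $\|S\|_\infty$ and $\|S^{-1}\|_\infty$. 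I expect the main delicacy to lie in carrying out the change of variables without having to differentiate $\H$ (handled via the identity $\H S^{-1}=P_1^{-1}S^{-1}\Delta$) and thereby confirming that the perturbation $R$ is genuinely bounded; once this reduction is in place, the rest is a standard Lyapunov-type argument.
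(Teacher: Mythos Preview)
Your proof is correct and follows the same overall strategy as the paper's: diagonalize the principal part of the ODE and then control the growth of the transformed variable by a differential inequality for $\frac{d}{d\zeta}\|\cdot\|^2$. The difference lies in how the imaginary part of $s$ is dealt with. The paper writes $s=r+i\omega$, first substitutes $z=S^{-*}\H x$ to obtain $z'=i\omega\Delta^{-1}z+(r\Delta^{-1}+Q)z$, and then performs a \emph{second} substitution $y=\Phi_\omega z$ with the unitary integrating factor $\Phi_\omega(\zeta)=\diag\bigl(e^{-i\omega\int_0^\zeta\alpha_k}\bigr)$ to strip off $i\omega\Delta^{-1}$ before applying Gronwall. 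You bypass this extra step by the cleaner observation that $\Delta^{-1}$ is real diagonal, so $\langle\Delta^{-1}w,w\rangle\in\R$ and hence $\Re\bigl(s\langle\Delta^{-1}w,w\rangle\bigr)=(\Re s)\langle\Delta^{-1}w,w\rangle$; this is exactly what the paper's $\Phi_\omega$ trick achieves, but packaged more economically. Your single change of variables $w=Sx$ (together with the identity $\H S^{-1}=P_1^{-1}S^{-1}\Delta$, which indeed lets you avoid differentiating $\H$ directly) is also slightly simpler than the paper's $z=S^{-*}\H x$. The end result and the constants obtained are the same.
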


\begin{proof}
 Writing $\tilde x=\H x$, \eqref{ode} can be equivalently written as 
 \begin{align*}
  \H(\zeta) P_1 \tilde x'(\zeta)=s\tilde x(\zeta)-\H(\zeta) P_0\tilde x(\zeta).
 \end{align*}

We write $s=i\omega+r$ with $\omega, \tau \in \R$ and diagonalize $P_1 \H(\zeta)=S^{-1}(\zeta)\Delta(\zeta)S(\zeta)$, see Assumption \ref{A1}. Thus,
$
\H(\zeta) P_1%=(P_1 \H(\zeta))^*=S^{*}(\zeta)\Delta^*(\zeta)S^{-*}(\zeta)
=S^{*}(\zeta)\Delta(\zeta)S^{-*}(\zeta)
$
and we get
\begin{align*}
& \Delta(\zeta) S^{-*}(\zeta) \tilde x'(\zeta)=i\omega S^{-*}(\zeta) \tilde 
x(\zeta)+ \left(r I-S^{-*}(\zeta) \H(\zeta) P_0 
S^{*}(\zeta)\right)S^{-*}(\zeta)\tilde x(\zeta).
\end{align*}
 Using the substitution 
$z=S^{-*}\tilde x$ gives the equivalent differential equation
\begin{align}
 z'(\zeta)&=i\omega \Delta^{-1}(\zeta)z(\zeta)+ \left(r \Delta^{-1}(\zeta)+Q(\zeta) 
\right)z(\zeta), 
\label{z}
\end{align}
where $$Q(\zeta):=-\Delta^{-1}(\zeta)S^{-*}(\zeta)\H(\zeta) 
P_0 S^{*}(\zeta)- (S^{-*})'(\zeta)S^{*}(\zeta).$$
Thus, equation \eqref{ode} is equivalent to equation \eqref{z}.
Due to the fact that $P_1\H(\zeta)$ has real eigenvalues, $\Delta(\zeta)$ is a diagonal, real matrix and
$i\omega\Delta^{-1}(\zeta)$ is a diagonal, purely imaginary matrix. 
%\marginpar{Hans wrote here: Note that $\dot f(t)=\alpha(\zeta)f(\zeta)$ has as solution 
%$f(\zeta)=f(0)exp^{\int_0^{\zeta}\alpha(\tau)d\tau}$}
We write $\Delta^{-1}(\zeta)=\diag_{k=1,\ldots,n}(\alpha_k(\zeta))$ with $\alpha_k(\zeta):[0,1]\to \R$ and  define $\Phi_{\omega}(\zeta)=\diag(\exp(-i\omega\int_0^{\zeta}\alpha_k(\tau)d\tau))$ which satisfies
$$\norm{\Phi_{\omega}(\zeta)}_{\BL(\C^d)} %=\left\|\diag(\exp(-i\omega\int_0^{\zeta}\alpha_k(\tau)d\tau))\right\|_{\BL(\C^d)}
=1, \qquad \zeta\in[0,1].$$
Multiplying \eqref{z} 
with $\Phi_{\omega}(\zeta)$, we get
\begin{align*}
&\Phi_{\omega}(\zeta) z'(\zeta)-i\omega 
\Delta^{-1}(\zeta)\Phi_{\omega}(\zeta)z(\zeta)=\Phi_{\omega}(\zeta)\left(r \Delta^{-1}(\zeta)+Q(\zeta) \right)z(\zeta)
\end{align*}
or equivalently 
\begin{align*}
\left(\Phi_{\omega}(\zeta)z(\zeta)\right)'=
 \left(r\Delta^{-1}(\zeta)+\Phi_{\omega}(\zeta)
Q(\zeta)\Phi_{\omega}^{-1}(\zeta)\right)\Phi_{\omega}(\zeta)z(\zeta).
\end{align*}
Using the substitution $y=\Phi_{\omega}z$, this ordinary differential equation becomes
\begin{align}\label{yode}
y'=(r \Delta^{-1}(\zeta)+ Q_{\omega}(\zeta) ) y(\zeta),
\end{align}
where $ Q_{\omega}(\zeta):=\Phi_{\omega}(\zeta)Q(\zeta)\Phi_{\omega}^{-1}(\zeta)$.
There exist constants $c_0, c_1 \geq 0$,  independent of $\omega$, such that
\begin{equation}\label{gr2}
2 \max_{\zeta \in [0,1]}\|r\Delta^{-1}(\zeta)+ Q_{\omega}(\zeta)\| \leq \abs{r} c_0+c_1.
\end{equation}
The solution $y$ of \eqref{yode} satisfies
\begin{align}\label{eqn:ode1}
\frac{d}{d\zeta}\|y(\zeta)\|^2=& y(\zeta)^*[(r \Delta^{-1}(\zeta)+ Q_{\omega}(\zeta) ) +(r \Delta^{-1}(\zeta)+ Q_{\omega}(\zeta) )^*]y(\zeta).
\end{align}
This together with \eqref{gr2} implies
\begin{equation*}
-(\abs{r} c_0+c_1)\|y(\zeta)\|^2\le \frac{d}{d\zeta}\|y(\zeta)\|^2\le (\abs{r} c_0+c_1)\|y(\zeta)\|^2,
\end{equation*}
and thus 
\begin{align*}\label{gronwall}
e^{- (\abs{r} c_0+c_1)\zeta}\norm{y(0)}^2 \le \norm{y(\zeta)}^2\leq 
%\norm{y(0)}\exp\left(\int_0^\zeta( \abs{r}\cdot c_0+c_1)d\tau \right)=
e^{ (\abs{r} c_0+c_1)\zeta}\norm{y(0)}^2.
\end{align*}
As the mapping $x\mapsto y$ is boundedly invertible on $
 L^2((0,1),\mathbb C^d)$, with norm independent on $\omega$, the statement follows.
\end{proof}
%___________________
%

Next we state some results from complex analysis.
%The proof of the implication  $2)\Rightarrow 1)$ uses some results  complex analysis. 
%In the following we introduce a special class of 
%entire functions.
%An entire function is a holomorphic function which is defined on the whole complex plane.

\begin{definition} \cite[II.1.27]{AvdIva95}
 An entire function $f$ is called an \emph{entire function of exponential type}, if there exist constants $C$ and $T$ such 
that $\abs{f(s)}\leq C e^{T\abs{s}} $ for all $s \in \C$.
 Further, an entire function $f$ of exponential type is said to be of \emph{sine 
type}, if
\begin{enumerate}
 \item the zeros of $f$ lie in a strip $\{s \in \C \mid \abs{\Im s}\leq h\}$ for some $h\geq 0$.
 \item there exist $\hat \omega \in \R$ and positive constants $c$ and $C$ such that $c\leq \abs{f(r+i\hat \omega)}\leq C$ 
for every $r \in \R$ holds.
\end{enumerate}
\end{definition}

\begin{proposition}\label{stf}\cite[Proposition II.1.28]{AvdIva95} (Levin 1961) If f is of sine type, then its set of 
zeros counted with algebraic multiplicity is a finite unification of 
sets each having a uniform gap.
\end{proposition}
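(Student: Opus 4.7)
The plan is to reduce Proposition~\ref{stf} to two independent claims: (i) a uniform local density bound, namely that every closed disk of radius one in $\C$ contains at most $K$ zeros of $f$ counted with multiplicity, with $K$ independent of the centre; and (ii) a combinatorial colouring argument that converts such a local bound into a partition of the zero set into finitely many uniformly separated subsequences.

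For claim~(i) I would exploit both halves of the sine-type hypothesis. Applying Phragm\'en--Lindel\"of in the half-planes above and below the line $\Im s=\hat\omega$, the upper estimate $|f(r+\iu\hat\omega)|\le C$ combined with the exponential-type bound gives a global estimate $|f(r+\iu y)|\le C_1\eu^{T|y-\hat\omega|}$ uniform in $r$. For an arbitrary centre $w_0\in\C$ with $|\Im w_0|\le h+1$ (outside this strip $f$ has no zeros), choose the anchor $\xi_0=\Re w_0+\iu\hat\omega$, for which the lower bound gives $|f(\xi_0)|\ge c>0$. For $R$ large enough (depending only on $h$ and $\hat\omega$) one has $D(w_0,1)\subset D(\xi_0,R)$, and Jensen's formula applied at $\xi_0$ yields
$$n(\xi_0,R)\log(R'/R)\le \frac{1}{2\pi}\int_0^{2\pi}\log|f(\xi_0+R'\eu^{\iu\theta})|\,d\theta-\log|f(\xi_0)|$$
for any $R'>R$. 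The right-hand side is bounded by a constant independent of $w_0$ thanks to the exponential-type estimate on the outer circle and $\log|f(\xi_0)|\ge\log c$, giving $n(w_0,1)\le n(\xi_0,R)\le K$.

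For claim~(ii) I would form a graph on the zeros of $f$, a zero of multiplicity $m$ contributing $m$ formal vertices at the same point, with edges between vertex pairs whose supports lie within distance $1/2$. By~(i), each vertex has at most $K-1$ neighbours, so a greedy colouring uses at most $K$ colours. Distinct points in a common colour class are then separated by at least $1/2$, and enumerating the classes produces the desired decomposition of the zero set into finitely many sequences each having a uniform gap.

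The principal obstacle is securing the lower bound $|f(\xi_0)|\ge c$ in~(i): mere exponential-type growth with zeros in a horizontal strip is not enough to bound local density, and without the sine-type hypothesis the zeros may accumulate arbitrarily densely along vertical directions. The Phragm\'en--Lindel\"of upgrade of the line bound to a two-sided half-plane bound is technically routine but also depends essentially on the two-sided nature of the sine-type assumption; these two ingredients are precisely what makes the Jensen-type counting argument uniform in the centre, and hence what lets the combinatorial step in (ii) close.
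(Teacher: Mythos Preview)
The paper does not supply its own proof of Proposition~\ref{stf}: it is quoted verbatim from \cite[Proposition II.1.28]{AvdIva95} (attributed there to Levin), and the authors use it as a black box in the proof of the implication $2)\Rightarrow 1)$ of Theorem~\ref{main}. So there is nothing in the paper to compare your argument against.

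That said, your sketch is a correct outline of the classical proof. The Phragm\'en--Lindel\"of upgrade of the one-line bound to the two-sided estimate $|f(r+\iu y)|\le C_1\eu^{T|y-\hat\omega|}$ is precisely the Plancherel--P\'olya type inequality for entire functions of exponential type bounded on a line; combined with the lower bound $|f(\xi_0)|\ge c$ at the anchor point, Jensen's formula gives the uniform local zero-count~(i) exactly as you describe. The greedy-colouring step~(ii) is also sound: the vertex set is countable, the degree bound $K-1$ follows from~(i), and the sequential greedy procedure needs at most $K$ colours; vertices corresponding to a repeated zero are mutually adjacent and hence land in distinct colour classes, so each class is genuinely separated by $1/2$. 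Two minor points worth tightening in a full write-up: make explicit that $f(\xi_0)\ne 0$ (immediate from $|f(\xi_0)|\ge c$) before invoking Jensen, and state the Phragm\'en--Lindel\"of variant you use in each half-plane rather than leaving it as ``routine''.
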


\begin{lemma}\label{deteig}
A complex number $s\in \C$ is an eigenvalue of a port-Hamiltonian operator $A$ if and only if
\begin{equation*}\det \begin{bmatrix} W_1\H(1)\Psi^s(1)+W_0\H(0) \end{bmatrix}=0,
\end{equation*}
where $\Psi^s$ is described in Lemma \ref{lsgode}.
\end{lemma}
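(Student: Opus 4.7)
The plan is to unpack the eigenvalue equation $sx = Ax$ into the system of ODEs covered by Lemma \ref{lsgode}, impose the boundary condition encoded in $\D(A)$, and reduce it to a linear algebra condition on the initial vector $x(0)$.

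First I would note that $s \in \sigma_p(A)$ exactly when there exists a nonzero $x \in \D(A)$ with $sx = (P_1 \tfrac{d}{d\zeta} + P_0)(\H x)$. By Lemma \ref{lsgode}, every (classical) solution of this ODE on $[0,1]$ is of the form $x(\zeta) = \Psi^s(\zeta) x(0)$ for some vector $x(0) \in \C^d$, and conversely every such function is a solution, and lies in $X$ with $\H x \in H^1((0,1);\C^d)$ (since $\Psi^s$ has continuously differentiable entries multiplied by $\H$ which is bounded with bounded inverse). Moreover, since $\Psi^s(\zeta)$ is the fundamental matrix of a linear ODE it is invertible for every $\zeta$, so $x \equiv 0$ iff $x(0) = 0$.

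Next, I would substitute $x(\zeta) = \Psi^s(\zeta) x(0)$ into the boundary condition from \eqref{domainA}:
\begin{equation*}
\begin{bmatrix} W_1 & W_0 \end{bmatrix} \begin{bmatrix} (\H x)(1) \\ (\H x)(0) \end{bmatrix}
= W_1 \H(1) \Psi^s(1) x(0) + W_0 \H(0) \Psi^s(0) x(0) = 0.
\end{equation*}
Since $\Psi^s(0) = I$ (the fundamental matrix at the initial point), this collapses to
\begin{equation*}
\bigl[ W_1 \H(1)\Psi^s(1) + W_0 \H(0) \bigr] x(0) = 0.
\end{equation*}

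Finally, the existence of a nonzero $x(0) \in \C^d$ satisfying this linear system is equivalent to the matrix $W_1\H(1)\Psi^s(1) + W_0\H(0)$ being singular, i.e., to the vanishing of its determinant. Combining with the first step, this gives exactly the stated characterization. There is no real obstacle here; the only subtle point is confirming that the candidate eigenfunction $\Psi^s(\cdot)x(0)$ actually lies in $\D(A)$, which follows because $\Psi^s$ is $C^1$ on $[0,1]$ with values in $\C^{d\times d}$, hence $\H x \in H^1((0,1);\C^d)$, and because $\Psi^s(0) = I$ ensures $x(0)$ coincides with the prescribed initial value used in the boundary condition.
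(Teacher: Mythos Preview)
Your proof is correct and follows essentially the same route as the paper's: both reduce the eigenvalue problem to the ODE of Lemma~\ref{lsgode}, parametrize solutions by the initial value $x(0)$ via the fundamental matrix $\Psi^s$, substitute into the boundary condition using $\Psi^s(0)=I$, and conclude by the determinant criterion. Your version is slightly more detailed (explicitly noting invertibility of $\Psi^s(\zeta)$ and checking $\H x\in H^1$), but the argument is the same.
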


\begin{proof}
For every $x(0)\in \C^d$ there exists a solution of the differential equation $sx(\zeta)= (P_1\frac{d}{d \zeta}+P_0 )(\H x)(\zeta)$, $ \zeta \in [0,1]$.
The complex number $s$ is an eigenvalue of $A$ if and only if $x\in \D(A)$ and $Ax=sx$. 
Using Lemma \ref{lsgode} this is equivalent to
\begin{align*}
x(\zeta)=\Psi^s(\zeta)x(0) \text{ and }\begin{bmatrix} W_1& W_0\end{bmatrix}\begin{bmatrix} (\H x)(1)\\ (\H x)(0)\end{bmatrix}=0.
\end{align*}
Inserting the first equation in the second, we get that $s$ is an eigenvalue of $A$ if and only if
%\begin{equation*}
$ \det\begin{bmatrix} W_1\H(1)\Psi^{s}(1)+W_0\H(0)\end{bmatrix}=0$. 
%\end{equation*}
\end{proof}

%--------------------

Now we are in the situation to give the proof of the implication $2)\Rightarrow 1)$.

\begin{proof}[ Proof of the implication $2)\Rightarrow 1)$ of Theorem \ref{main}]
Assertion $2)$ implies that the eigenvalues lie in a strip parallel to the imaginary axis. Thus, thanks to Proposition \ref{dense} and Proposition \ref{rieszHans} it suffices to show that the eigenvalues $(s_n)_{n\in \N}$ of $A$ (counted according to the algebraic multiplicity) can decomposed into finitely many sets having each a uniform gap.
 
Using Proposition \ref{stf}, this is implied by  the existence of an 
entire function $g$ of exponential type with 
\begin{enumerate}[label=\roman*),itemsep=0.2ex]
	\item $g$ has exactly the zeros $(s_n)_{n\in \N}$ and 
	\item there exist $r,c,C>0$ such that for every $\omega\in \R$: $c\leq 
\abs{g(r+i\omega)}\leq C$.
\end{enumerate}
Note that $f:\C\to \C$, defined by $f(s):=g(is)$ is a sine-type function if $g$ is an entire function of exponential type and satisfying the above conditions $i)$ and $ii)$.
We define $g:\C\to \C$ by \begin{align}
 g(s):=\det\begin{bmatrix} W_1\H(1)\Psi^{s}(1)+W_0\H(0)\end{bmatrix}. \label{stfg}
\end{align} 
By Lemma \ref{deteig}, a complex number $s\in \C$ is an eigenvalue of the operator $A$ if and 
only if $g(s)=0$.
%there exists a vector $x_0(0)\neq 0$ such that

$\Psi^s$ described in Lemma \ref{lsgode} is an entire function, c.f. \cite[Theorem 24.1]{Was87} and thus $g$ is an entire function as well. Clearly, $g$ has the zeros $(s_n)_{n\in \N}$. 
%Now we fix one $s\in \C$ and 
%we denote by $\mu_i$ the eigenvalues of $\begin{bmatrix} 
%W_1\H(1)\Psi^{s}(1)+W_0\H(0)\end{bmatrix}$. 
Since the determinant of a matrix equals the product of its eigenvalues and 
every eigenvalue is smaller or equal the norm of the matrix, it yields $\abs{g(s)}%=\prod_1^n\mu_{i}
\leq 
\|\begin{bmatrix} W_1\H(1)\Psi^{s}(1)+W_0\H(0)\end{bmatrix}\|^n$.
Using Lemma \ref{lsgode} it holds {$\abs{g(s)}\leq 
c_2 e^{\abs{\Re s}c_3}$} for some constants $c_2,c_3 \geq 0,
$
and thus, $g$ is bounded on lines parallel to the imaginary axis and grows at most exponentially.

Next, we show  that $g$ is bounded away from zero on some line parallel to the imaginary axis.
Since the control operator $B$ of the port-Hamiltonian system \eqref{eqn:pde} is admissible, see Section \ref{sec:PHS}, it yields that for
$\omega>\omega_0(A)$ exists a constant $M_{\omega}>0$ such that
\begin{equation}\label{UB}
\|(s-A_{-1})^{-1}B\|_{\BL(\C^d, X)}\leq \frac{M_{\omega}}{\sqrt{\Re s-\omega}} \quad \text{for } \Re 
s\geq\omega,
\end{equation}
see  \cite[Proposition 
4.4.6]{TucsWeis14}. %where $B$ is given by \eqref{B}.
Let $r>\omega_0(A)$ and we assume that $g$ is not bounded away from zero on $r+i\R$, i.e., there exists a sequence 
$\omega_k 
\in \R$ such that $g(r+i\omega_k)\to 0$. Since all zeros of $g$ have real part less or equal to the growth bound 
$\omega_0(T)$ of the 
$C_0$-semigroup generated by $A$, it holds true that $g(r+i\omega_k)\neq 0$. % for all $r>\omega_0(T)$. 
Let %$s\in\C$ with $\Re s > 
%\omega_0(T)$ and 
$u_0$ be an arbitrary vector in 
$\C^d$. By \cite[Theorem 12.1.3]{JacZwa12} the solution $x_{u_0}^{r+i\omega_k}$ of 
%(\cdot):=(sI-A_{-1})^{-1}Bu_0$ is the solution of
 \begin{align*}\label{bcsu0}
 (r+i\omega_k)x(\zeta)= \left(P_1\frac{d}{d \zeta}+P_0 \right)(\H x)(\zeta)\\
 \begin{bmatrix}W_1 & W_0 \end{bmatrix} \begin{bmatrix} (\H x)(1) \\(\H x)(0)\end{bmatrix}=u_0,
 \end{align*}
 %depending of $u_0$, 
 is given by
% This yields because $(sI-A_{-1})^{-1}B$ equals the transfer function of the problem in the standard control formulation 
% \eqref{scf} and the boundary control system \eqref{bcsu0} has as transfer function, c.f. \cite[Theorem: 12.1.3]{JacZwa99}
\begin{align*}
x_{u_0}^{r+i\omega_k}&=(((r+i\omega_k)-A_{-1})^{-1}(\frak A \tilde B-(r+i\omega_k)\tilde B)+\tilde B)u_0\\
%&=((sI-A_{-1})^{-1}\left((\frak A \tilde B-A_{-1}\tilde B)-(-A_{-1}\tilde B+s\tilde B)\right)+\tilde B)u_0\\
&%=(((r+i\omega_k)I-A_{-1})^{-1}B-\tilde B+\tilde B)u_0
=((r+i\omega_k)-A_{-1})^{-1}Bu_0
\end{align*}
and $x_{u_0}^{r+i\omega_k}(0)$ fulfils 
\begin{align*}
\begin{bmatrix} W_1\H(1)\Psi^{r+i\omega_k}(1)+W_0\H(0)\end{bmatrix}x_{u_0}^{r+i\omega_k}(0)=u_0.
\end{align*} 

Let $M_k:=\begin{bmatrix} W_1\H(1)\Psi^{r+i\omega_k}(1)+W_0\H(0)\end{bmatrix}$.
Since $g(r+i\omega_k)=\det(M_k)\to 0$ and $1=\det(I)%=\det(M_k\cdot M_k^{-1})
=\det(M_k)\det(M_k^{-1})$, we have 
$\det(M_k^{-1})\to \infty$. 
Hence, $M_k^{-1}$ has an eigenvalue $\nu_k$ with $\abs{\nu_k}\to \infty$. Choose $u_{k,0}$ as a normalized 
eigenvector to $\nu_k$. Then it yields 
\begin{align*}
x^{r+i\omega_k}_{u_{k,0}}(0)=M_k^{-1}u_{k,0}=\nu_k u_{k,0}, 
\end{align*} 
which implies
$\|x^{r+i\omega_k}_{u_{k,0}}(0)\|_{\C^d}\to \infty$. 
% This implies that the inverse of ?"'! is unbounded.
We note that the function $x^{r+i\omega_k}_{u_{k,0}}$ is given by 
$$x^{r+i\omega_k}_{u_{k,0}}(\zeta)=\Psi^{r+i\omega_k}(\zeta)x^{r+i\omega_k}_{u_{k,0}}(0).
%=\Psi^{r+i\omega_k}(\zeta)\nu_k u_{k,0}.
$$
Using that 
the inverse of $\Psi^{r+i\omega_k}$ is a bounded function, see Lemma \ref{lsgode}, we get
\begin{equation}\label{unbounded}\|x^{r+i\omega_k}_{u_{k,0}}\|_{L^2((0,1);\C^d)}\to \infty.\end{equation} 
However, since we also have
$x^{r+i\omega_k}_{u_{k,0}}=((r+i\omega_k)-A_{-1})^{-1}Bu_{k,0}$, equation \eqref{unbounded} is in contradiction with the uniform boundedness of 
$((r+i\omega_k)-A_{-1})^{-1}Bu_{0,k}$, see equation \eqref{UB}.
Thus the entire function $g$ is of exponential type and satisfies condition $i)$ and $ii)$. This concludes the proof.
\end{proof}

\subsection{Proof of the implication $1)\Rightarrow 2)$ of Theorem \ref{main}}

We start with some characterizations of the resolvent and the spectrum of port-Hamiltonian operators.

\begin{lemma}\label{resolvent-explode}
Let  $\Lambda\in C([0,1];\C^d)$  with $\Lambda(\zeta)$   diagonal, invertible und positive definite for every $\zeta\in[0,1]$,  $Q \in \C^{n\times n}$  singular  and  $A:\D(A) \subset L^2((0,1));\mathbb C^d)\rightarrow L^2((0,1));\mathbb C^d)$ defined by $Ax=\Lambda x' $ and  $\D(A)=\{x\in H^1((0,1);\C^d) \mid x(1)+Qx(0)=0 \}$. Then there exist real constants $\gamma< 0$ and $a,b>0$ such that
$\norm{(s-A)^{-1}}\geq a e^{b\abs{s}}$ for real $s\in \rho(A)$ with $s < \gamma$.
\end{lemma}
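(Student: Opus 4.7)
My plan is to exploit the singularity of $Q$ by constructing a quasimode whose $L^2$-norm is only polynomially small in $|s|$ while the residual $(s-A)x$ is exponentially small. Since $\Lambda(\zeta)=\diag(\lambda_k(\zeta))$ is diagonal and positive, the homogeneous equation $sy-\Lambda y'=0$ admits the diagonal fundamental matrix $\Psi(\zeta,s)=\diag(e^{s\phi_k(\zeta)})$ with $\phi_k(\zeta):=\int_0^\zeta d\tau/\lambda_k(\tau)$; set $\phi_*:=\min_k \phi_k(1)>0$. Because $Q$ is singular, pick a unit vector $v\in \ker Q$. The guiding observation is that $x^\natural(\zeta):=\Psi(\zeta,s)v$ solves $sx-\Lambda x'=0$ identically but misses the boundary condition by exactly $x^\natural(1)+Qx^\natural(0)=\Psi(1,s)v$, whose norm is at most $e^{s\phi_*}$ for real $s<0$.

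To repair this, fix once and for all a cutoff $\rho\in C^1([0,1];\R)$ with $\rho(0)=0$, $\rho(1)=1$, and set $w(\zeta):=-\rho(\zeta)\Psi(1,s)v$ and $x:=x^\natural+w$. Then $x\in H^1((0,1);\C^d)$ with $x(0)=v$, $x(1)=0$, hence $x(1)+Qx(0)=Qv=0$ and $x\in\D(A)$. A direct computation gives
\[
(s-A)x = sw-\Lambda w'=\bigl(\rho'(\zeta)\Lambda(\zeta)-s\rho(\zeta)I\bigr)\Psi(1,s)v,
\]
so $\|(s-A)x\|_{L^2}\leq C(1+|s|)e^{s\phi_*}$ for some $C$ depending only on $\Lambda$ and $\rho$.

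Conversely, the substitution $u=\phi_k(\zeta)$ and the fact that $\lambda_k$ is bounded above and below by positive constants yield $\int_0^1 e^{2s\phi_k(\zeta)}\,d\zeta\asymp 1/|s|$ as $s\to-\infty$, whence
\[
\|x^\natural\|_{L^2}^2=\sum_k|v_k|^2\int_0^1 e^{2s\phi_k(\zeta)}\,d\zeta\geq \frac{c}{|s|}
\]
for some $c>0$ (using $\|v\|=1$). Since $\|w\|_{L^2}\leq e^{s\phi_*}$ is exponentially negligible, $\|x\|_{L^2}\geq c'|s|^{-1/2}$ for $|s|$ large. Combining with the residual bound,
\[
\|(s-A)^{-1}\|\geq \frac{\|x\|_{L^2}}{\|(s-A)x\|_{L^2}}\geq \frac{c''\,e^{|s|\phi_*}}{|s|^{3/2}},
\]
and the right-hand side dominates $ae^{b|s|}$ for any fixed $0<b<\phi_*$ and suitable $a>0$, once $s<\gamma$ with $\gamma<0$ sufficiently negative.

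The delicate step is the polynomial (not exponential) lower bound $\|x^\natural\|_{L^2}\gtrsim |s|^{-1/2}$: this reflects the fact that $e^{s\phi_k(\zeta)}$ is still of unit size on a $\zeta$-interval of length $\asymp 1/|s|$ near $\zeta=0$, and this near-boundary mass keeps $\|x^\natural\|_{L^2}$ polynomially large. Without this, the exponentially small residual would be matched by an exponentially small $\|x\|_{L^2}$ and no useful resolvent bound would emerge; the full strength of the argument lives in the interplay between the singularity of $Q$ (which produces the exponentially small right-hand side via $\Psi(1,s)v$) and the boundary-layer behaviour of $x^\natural$ near $0$ (which gives the polynomial lower bound on $\|x\|_{L^2}$).
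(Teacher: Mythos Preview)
Your argument is correct. Both proofs exploit a unit vector $v\in\ker Q$ and manufacture a test function $x\in\D(A)$ with $x(0)=v$, $x(1)=0$ whose image under $s-A$ is exponentially small while $\|x\|_{L^2}$ decays only polynomially; the resolvent lower bound then drops out from the ratio.

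The difference lies in how the test function is built. The paper writes down an explicit right-hand side $g$ and solves $\Lambda x'=sx+g$ by variation of constants, choosing $g$ so that the resulting $x$ lands exactly at $x(1)=0$; the decomposition $x=e^{F_s(\cdot)-F_s(1)}g_0-\tfrac{1}{2s}g$ then plays the same role as your $x^\natural+w$. You instead take the homogeneous solution $x^\natural=\Psi(\cdot,s)v$ directly and repair the boundary defect $\Psi(1,s)v$ with a fixed $C^1$ cutoff. Your route is slightly more elementary---no ODE inversion, no $(I-e^{2F_s(1)})^{-1}$---and makes the mechanism (boundary-layer mass near $\zeta=0$ versus exponentially small residual) more transparent. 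The paper's construction, on the other hand, keeps everything within the ODE calculus already used elsewhere in the article and produces an exact element of $\D(A)$ without an auxiliary cutoff. Both yield bounds of the form $\|(s-A)^{-1}\|\gtrsim |s|^{-\alpha}e^{\phi_*|s|}$, which is more than enough for the stated conclusion.
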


\begin{proof}
Let   $0\neq x(0)\in \ker Q$. We define 
$F_s(\zeta):=s\int_0^{\zeta}\Lambda^{-1}(\tau)d\tau $ for $s\in \mathbb R$ and $\zeta\in [0,1]$. Note that $F_s$ and $\Lambda$ commute as both are diagonal.
There exists $\gamma_0<0$ such that $I-e^{2F_s(1)}$ is invertible if $\, s<\gamma_0$.
Further, let $s\in \rho(A)$ with $\, s<\gamma_0$
 and define
\begin{equation}g(\zeta)=2s  e^{F_s(1)-F_s(\zeta)} g_0=2s e^{s\int_{\zeta}^1 \Lambda^{-1}(\tau)d\tau}g_0 \label{defg}
\end{equation}
with $g_0:=e^{F_s(1)} ( I-e^{2F_s(1)} )^{-1}x(0)$.
Then the solution of
\begin{equation}\label{eq:res}\Lambda x'=sx+g %, \quad x(1)+Qx(0)=0,
\end{equation} is given by
\begin{align*}
x(\zeta)& %=e^{ \int_0^{\zeta} s\Lambda^{-1}(\tau)d\tau}x(0)+\int_0^{\zeta}  e^{\int_{\tau}^{\zeta}s\Lambda^{-1}(\sigma)d\sigma} \Lambda^{-1}(\tau)g(\tau) d \tau \\
=e^{ F_s(\zeta)}x(0)+\int_0^{\zeta} e^{F_s(\zeta)-F_s(\tau)} \Lambda^{-1}(\tau)g(\tau)  d \tau \\
&=e^{ F_s(\zeta)}x(0)-   e^{F_s(\zeta)+F_s(1)}\int_0^{\zeta}  (-2s\Lambda^{-1}(\tau)  ) e^{-2F_s(\tau)}d \tau   \,  g_0\\
&=e^{ F_s(\zeta)}x(0)-   e^{F_s(\zeta)+F_s(1)}(e^{-2F_s(\zeta)}  -I ) g_0 \\
&=e^{ F_s(\zeta)}(I-e^{2 F_s(1)})e^{-F_s(\zeta)}g_0-e^{-F_s(\zeta)+F_s(1)}g_0+e^{-F_s(\zeta)+F_s(1)}g_0\\
%&=e^{ F_s(\zeta)}x(0)-   ( I-e^{2F_s(1)} )^{-1}(e^{-F_s(\zeta)+2F_s(1)}  -e^{F_s(\zeta)+2F_s(1)} ) x(0) \\
&= e^{F_s(\zeta)-F_s(1)} g_0   - \frac{1}{2s}  g(\zeta).
%&=e^{ F(\zeta)}x(0)+\int_0^{\zeta} e^{F(\zeta)+F(1)} e^{-2F(\tau)} g_0 d \tau \\
%&=e^{ F(\zeta)}x(0)+ e^{F(\zeta)+F(1)} g_0 \int_0^{\zeta} e^{-2F(\tau)} d \tau \\
%&= e^{F(\zeta)}x(0)- \frac{1}{2s} ( \Lambda(\zeta)e^{F(1)-F(\zeta)}-\Lambda(0)e^{F(\zeta)+F(1)} )g_0
\end{align*}
In particular $x(1)=0$ and thus $(s-A)^{-1}g=x$. As $\Lambda\in C([0,1];\C^d)$  is a diagonal and invertible matrix-valued function with positive entries on its diagonal, there exists $\lambda_0>0$ such that the diagonal elements of $\Lambda$ are bounded  by $\lambda_0$.
First using \eqref{defg} we can estimate 
$$\norm{g}_{L^2((0,1);\C^d)} \leq c \sqrt{|s|}\norm{g_0}, \; c>0,$$
and 

\begin{align*}
\norm{x}_{L^2(0,1;\C^d)}&\geq \norm{e^{F_s(\cdot)-F_s(1)}g_0}_{L^2(0,1;\C^d)}-\frac{1}{2|s|}\norm{g}_{L^2((0,1);\C^d)} \\
&= \left(\int_0^1 \norm{e^{-s\int_\tau^1\Lambda^{-1}(\sigma) d\sigma}g_0}^2 d\tau \right)^{\frac{1}{2}}-\frac{1}{2|s|}\norm{g}_{L^2((0,1);\C^d)} \\
&\geq \left(\int_0^1 e^{2\abs{s}(1-\tau)\lambda_0}\norm{g_0}^2 d\tau \right)^{\frac{1}{2}}-\frac{1}{2|s|}\norm{g}_{L^2((0,1);\C^d)}\\
%&\left(\int_0^1 \normk{e^{F(\tau)-F(1)}}^2 \norm{g_0}^2 d\tau \right)^{\frac{1}{2}}-\norm{g_0} \
&\geq \frac{1}{4|s|^2\lambda_0}\left( e^{2\abs{s} \lambda_0}-1\right) \norm{g}_{L^2((0,1);\C^d)}  -\frac{1}{2|s|}\norm{g}_{L^2((0,1);\C^d)}.
\end{align*}
%Since $x=(s-A)^{-1}g$, the solution of the problem \eqref{eq:res} behaves like the resolvent $(sI-A)^{-1}$.
This completes the lemma.
\end{proof}

\begin{lemma}\label{large-gap}
Let $A$ be a port-Hamiltonian operator, which generates a $C_0$-semi\-group. Furthermore, let $A$ be a \riesz and let $(s_n)_{n\in\N}$ denote its eigenvalues. Then there exists a constant $K>0$ such that for every $n\in \N$ within the ball $\{s\in \C\mid \abs{s-s_n}\leq K\abs{\Re s_n}^2\}$ there lie at most $d$ eigenvalues.

\end{lemma}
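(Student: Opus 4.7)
My plan is to bound the number of eigenvalues in the ball by a complex-analytic zero-counting argument applied to the characteristic determinant $g(s):=\det M(s)$ with $M(s):=W_1\H(1)\Psi^s(1)+W_0\H(0)$. By Lemma \ref{deteig} the eigenvalues of $A$ are exactly the zeros of the entire function $g$, and a standard analysis of the boundary-value problem identifies algebraic multiplicities with the orders of these zeros, so it suffices to bound the number of zeros of $g$ (with multiplicity) in the ball $|s-s_n|\le K|\Re s_n|^2$.

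The upper bound $|g(s)|\le C_1\,e^{dc_0|\Re s|}$ follows directly from Lemma \ref{lsgode}. For a matching lower bound on $|g|$ off the spectrum, I would use variation of parameters on the ODE governing would-be eigenfunctions to express $(s-A)^{-1}$; here the inversion of the boundary-value problem is encoded by $M(s)^{-1}$, which yields a bound $\|M(s)^{-1}\|\le C_2\,e^{c_0|\Re s|}\,\|(s-A)^{-1}\|$. Plugging in Lemma \ref{Riesz-resolvent} together with the matrix identity $|\det M^{-1}|\le\|M^{-1}\|^d$ gives $|g(s)|\ge C_3\,d(s,\sigma(A))^d\,e^{-dc_0|\Re s|}$ whenever $d(s,\sigma(A))>M$.

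With these two bounds in hand, I would apply Jensen's formula to $g$ on a disk centred at $s_n$ of radius $R$ proportional to $|\Re s_n|^2$, with a reference point $s_n^\star$ inside this disk chosen so that both $|s_n^\star-s_n|$ and $d(s_n^\star,\sigma(A))$ are of order $R$. The exponential factors $e^{\pm dc_0|\Re s|}$ in the upper and lower bounds then partially cancel, while the $d$-th power in the matrix identity contributes the constant $d$ to the resulting zero count; tuning the proportionality constant yields at most $d$ zeros inside the prescribed ball.

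The main obstacle is the precise calibration in the last step: the residual from the exponential cancellation scales linearly in $R$, and absorbing it into the $\log(R/r)$ factor in Jensen's inequality is what forces $R$, and hence the ball radius $r=K|\Re s_n|^2$, to scale quadratically in $|\Re s_n|$. A secondary technical point is producing a reference point $s_n^\star$ simultaneously close to $s_n$ and far enough from $\sigma(A)$ for Lemma \ref{Riesz-resolvent} to be applicable, which uses the discreteness of $\sigma(A)$ afforded by the compact resolvent of $A$ together with the Riesz-basis-of-subspaces structure in Definition \ref{defDRSO}.
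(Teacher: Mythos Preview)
Your Jensen-formula approach has a decisive gap in the calibration step: the exponential factors do not cancel but compound. Jensen's inequality bounds the number of zeros in $\{|s-s_n^\star|\le r\}$ by $(\log(R/r))^{-1}\bigl(\max_{|s-s_n^\star|=R}\log|g(s)|-\log|g(s_n^\star)|\bigr)$. Your upper bound gives $\max\log|g|\le \log C_1+dc_0(|\Re s_n^\star|+R)$ on the boundary circle, while your lower bound gives $\log|g(s_n^\star)|\ge \log C_3+d\log d(s_n^\star,\sigma(A))-dc_0|\Re s_n^\star|$ at the centre. After subtraction the two $dc_0|\Re s_n^\star|$ contributions carry the \emph{same} sign and add to $2dc_0|\Re s_n^\star|$; even if you move $s_n^\star$ onto the imaginary axis (which already forces $R$ at least of order $|\Re s_n|$), a residual of order $dc_0R$ survives. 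A term linear in $R$ can never be absorbed into the logarithmic factor $\log(R/r)$ so as to leave a fixed bound $d$: one is driven to $r$ of order at most $R\,e^{-c_0R}$, exponentially small rather than $K|\Re s_n|^2$. The quadratic scaling you describe simply does not emerge from this mechanism. There is also a circularity in the secondary step: producing $s_n^\star$ with $d(s_n^\star,\sigma(A))$ of order $R$ presupposes a sparsity of eigenvalues near $s_n$ of essentially the same strength as the conclusion you are after.

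The paper's proof is of a completely different nature and uses no complex analysis. By Proposition~\ref{control} the associated port-Hamiltonian control system is exactly controllable, hence the dual pair $(A^*,B^*)$ is exactly observable and satisfies the Hautus inequality $\norm{(s-A^*)x}^2+|\Re s|\,\norm{B^*x}^2\ge m|\Re s|^2\norm{x}^2$ for $\Re s<0$. Given any $d+1$ unit eigenvectors $e_1,\dots,e_{d+1}$ of $A^*$ with distinct eigenvalues $\lambda_1,\dots,\lambda_{d+1}$, the $d+1$ vectors $B^*e_n\in\C^d$ are linearly dependent, so some normalised combination $x=\sum_n a_ne_n$ satisfies $B^*x=0$. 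Applying the Hautus inequality at $s=\lambda_{d+1}$ together with the Riesz-basis norm equivalence of Proposition~\ref{prop-RBP} yields $\sum_{n=1}^d|\lambda_{d+1}-\lambda_n|^2\ge c\,|\Re\lambda_{d+1}|^2$, which is exactly the required gap estimate. Here the constant $d$ enters as the dimension of the input space $\C^d$, not via a determinant power.
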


\begin{proof}
Without lost of generality we assume that $A$ generates an exponentially stable $C_0$-semigroup. %Thus, we can apply the Hautus Test later on.
By Proposition \ref{control} the corresponding port-Hamiltonian control system \eqref{eqn:pde} with control operator $B$ %re exists a suitable control operator $B\in \BL(\C^d,X_{-1})$ such that the port-Hamiltonian system 
%with port-Hamiltonian operator $A$ and input operator $B$
is 
exactly 
controllable in finite time. 
Then the dual system, described by $A^*$ and $B^*$, is exactly observable and it yields due to the Hautus Test, c.f. \cite{RusWei94}, 
that there exists a positive constant $m$ such that
\begin{equation}\label{Hautus}
 \norm{(s-A^*)x}^2+\abs{\Re s}\norm{B^*x}^2\geq m \abs{\Re s}^2\norm{x}^2, \quad \Re s <0, x\in \D(A^*).
\end{equation}
No matter that there may exist generalized eigenvectors, we consider only eigenvectors corresponding to different 
eigenvalues of $A^*$. As $\sigma(A)=\overline{\sigma(A^*)}$, it suffices to prove the statement for $A^*$.
Choose arbitrary $e_1,\ldots e_{d+1}$ normed eigenvectors of the operator $A^*$ to the eigenvalues $\lambda_1,\ldots, \lambda_{d+1}$ with $\lambda_n\neq \lambda_{m}$  for  $n\not=m$ $\in 1, \ldots, d+1$.

Since $B^*\in \BL(\D(A^*),\C^d)$ the $d+1$ vectors $B^*e_n$ are linearly dependent in $\mathbb C^d$, i.e,
%\marginpar{$K$ oder $\C?$} 
 there exists scalars $a_1,\ldots,a_{d+1} \in \C$ with 
$\sum_{n=1}^{d+1}\abs{a_n}^2=1$ such that
\begin{equation}
 a_1B^*e_1+\ldots a_{d+1}B^*e_{d+1}=0.
\end{equation}
Consider $x=\sum_{n=1}^{d+1}a_ne_n$. Then $x\in \D(A^*)$ with 
$B^*x=0$, and $$\norm{x}^2=\norms{\sum_{n=1}^{d+1}a_ne_n}\ge m_1>0,$$ by Proposition \ref{prop-RBP}.
%\begin{equation}\label{h}
 %\norm{x}^2=\norm{\sum_{i=1}^{d+1}{b_i}e_i}^2\leq \left(\sum_{i=1}^{d+1}\abs{b_i}\norm{e_i}\right)^2 
%\leq\left(\sum_{i=1}^{d+1}\abs{b_i}\right)^2\leq \sum_{i=1}^{d+1}\abs{b_i}^2(d+1)\leq (d+1).
%\end{equation}
It follows with the Hautus Test \eqref{Hautus} at the point $s=\lambda_{d+1}$ 
\begin{align*}
m \,m_1 \abs{\Re \lambda_{d+1}}^2&\leq  
\norm{(\lambda_{d+1}-A^*)x}^2%=\frac{1}{m}\norms{(s_{d+1}-A^*)\sum_{n=1}^{d+1}a_ne_n}^2\\&
%=\frac{1}{m}\norms{\sum_{ n=1}^{d+1}a_n(s_{d+1}-A^*)e_n} ^2
=\norms{\sum_{n=1}^{d}a_n(\lambda_{d+1}-\lambda_n)e_n}^2\\&\leq 
m_2\sum_{n=1}^{d}\abs{a_n} ^2\abs{
\lambda_{d+1}-\lambda_n } ^2.
\end{align*}
Thus,\begin{align*}
\frac{d+1}{d+1}\frac{m \,m_1}{m_2} \abs{\Re \lambda_{d+1}}^2&\leq  
\sum_{n=1}^{d}\abs{
\lambda_{d+1}-\lambda_n }^2.
\end{align*}
Since the eigenvalues are arbitrary chosen, this implies that in the ball with radius $\frac{m \,m_1}{(d+1)\; m_2}\abs{\Re \lambda_{d+1}}^2 $ around $\lambda_{d+1}$ lies at most $d$ eigenvalues.
Hence, the statement follows.
\end{proof}

Now we are in the position to prove the implication $1)\Rightarrow 2)$.
%
%{\red
\begin{proof} [Proof of the implication $1)\Rightarrow 2)$ of Theorem \ref{main}:]
%Let A be a \riesz and a port-Hamiltonian operator and generator of a $C_0$-semigroup. 
We assume that $A$ does not generate a $C_0$-group. % and give a proof by contradiction.

Since $A$ is a port-Hamiltonian operator, we denote by $S$ the matrix-valued function such that $P_1\H(\zeta)=S^{-1}(\zeta)\Delta(\zeta)S(\zeta)$, see Assumption \ref{A1}.
Since the eigenvalues of $P_1\H(\zeta)$ and $\H(\zeta)^{\frac{1}{2}}P_1\H(\zeta)^{\frac{1}{2}}$ are the same,  it follows by Sylvester’s law of inertia that the number of positive and negative eigenvalues of $P_1\H(\zeta)$ equal those of $P_1$.
Let $d_1$ denote the number of positive and $d_2=d-d_1$ the number of negative eigenvalues of $P_1$.
Thus without loss of generality $\Delta$ can be written as $\Delta(\zeta)=\begin{bsmallmatrix}\Lambda(\zeta)& 0\\ 0&\Theta(\zeta)\end{bsmallmatrix}$, where $\Lambda(\zeta)\in \C^{d_1\times d_1}$ correspond to the  positive eigenvalues  and $\Theta(\zeta)\in \C^{d_2\times d_2}$ to the negative ones.

 Let $\mS\in \BL(X)$ be the multiplication operator $(\mS x)(\zeta):= S(\zeta)x(\zeta)$. By assumption $\mS$ is invertible and we obtain
 %We use the state transformation $z:=Sx$ to obtain 
 \begin{align*}
 \mS A \mS^{-1}z(\zeta)=&\Delta(\zeta)(z(\zeta))'\\
 & +\Delta'(\zeta)z(\zeta) +S(\zeta)(S^{-1})'
 \Delta(\zeta)z(\zeta)+S(\zeta)P_0\H(\zeta)S^{-1}(\zeta)z(\zeta)\\
 \D(\mS A\mS^{-1})=&\{z\in H^1((0,1);\C^d)\mid \begin{bmatrix} W_1 & W_0 
\end{bmatrix} \begin{bsmallmatrix} (\H S^{-1}z)(1)\\(\H S^{-1}z)(0) 
  \end{bsmallmatrix}
=0\}.
\end{align*}
The operator $ \mS A \mS^{-1}$ generates a $C_0$-semigroup, too.
We split the variable $z(\zeta)=\begin{bsmallmatrix}z^+(\zeta)\\ z^-(\zeta)\end{bsmallmatrix}\in \C^{d}$ with $z^+(\zeta)\in \C^{d_1}$ and $z^-(\zeta)\in \C^{d_2}$, and
we define $W_1\H(1) S^{-1}(1)=:\begin{bmatrix}V_1 & V_2\end{bmatrix}$ and $W_0\H(0) S^{-1}(0)=:\begin{bmatrix}U_1 & U_2\end{bmatrix}$, where $U_1,V_1\in\C^{d\times d_1}$ and $U_2,V_2\in \C^{d\times d_2}$. 
Then, $z\in \D( \mS A \mS^{-1})$ if and only if $z\in H^1((0,1);\C^d)$ and
\begin{align*}
0&= \begin{bmatrix} W_1& W_0\end{bmatrix} \begin{bmatrix} (\H S^{-1}z)(1)\\ (\H   S^{-1}z)(0)\end{bmatrix}\\&=\begin{bmatrix}V_1 & V_2\end{bmatrix}\begin{bmatrix} z^+(1)\\ z^-(1)\end{bmatrix}+\begin{bmatrix}U_1 & U_2\end{bmatrix}\begin{bmatrix} z^+(0)\\ z^-(0)\end{bmatrix}\\&=\begin{bmatrix}V_1 & U_2\end{bmatrix}\begin{bmatrix} z^+(1)\\ z^-(0)\end{bmatrix}+\begin{bmatrix}U_1 & V_2\end{bmatrix}\begin{bmatrix} z^+(0)\\ z^-(1)\end{bmatrix}\\&=K\begin{bmatrix}\Lambda(1) z^+(1)\\ \Theta(0)z^-(0)\end{bmatrix}+Q\begin{bmatrix}\Lambda(0) z^+(0)\\ \Theta(1)z^-(1)\end{bmatrix},
\end{align*}
where $K:=\begin{bmatrix}V_1 & U_2\end{bmatrix}\begin{bsmallmatrix} \Lambda(1)^{-1}& 0\\0&\Theta(0)^{-1}\end{bsmallmatrix}$ and $Q:=\begin{bmatrix}U_1 & V_2\end{bmatrix}\begin{bsmallmatrix} \Lambda(0)^{-1}& 0\\0&\Theta(1)^{-1}\end{bsmallmatrix}$.
Since $ \mS A \mS^{-1}$ is the generator of a $C_0$-semigroup and this property is invariant under bounded perturbations, $K$ is invertible, see \cite[Theorem 13.3.1]{JacZwa12}. 

Let $\T\in \BL(X)$ be defined by $\T  \begin{bsmallmatrix}z^+(\zeta)\\ z^-(\zeta)\end{bsmallmatrix}:= \begin{bsmallmatrix}z^+(\zeta)\\ z^-(1-\zeta)\end{bsmallmatrix}$. Clearly, $\T$ is invertible.
Then the operator $\mA:=\T\mS A \mS^{-1}\T^{-1}$ on $X$ is given by
 \begin{align*}
 \mA z(\zeta)&=\begin{bmatrix}\Lambda(\zeta)& 0\\ 0&-\Theta(1-\zeta)\end{bmatrix}z'(\zeta) +
 %S(\zeta)(S^{-1})'\Delta(\zeta)z(\zeta)+S(\zeta)P_0\H(\zeta)S^{-1}(\zeta)
 R(\zeta)z(\zeta)\\
%&=\begin{bmatrix}\Lambda(\zeta)& 0\\ 0&-\Theta(1-\zeta)\end{bmatrix} \frac{d}{d\zeta}z(\zeta) +P(\zeta)z(\zeta)\\
 \D(\mA)&=\left\{z\in H^1((0,1);\C^d)\mid K\begin{bmatrix}\Lambda(1) &0\\ 0 &\Theta(0)\end{bmatrix}z(1)+Q\begin{bmatrix}\Lambda(1) &0\\ 0 &\Theta(0)\end{bmatrix}z(0)
=0\right\},
\end{align*}
 where $z\mapsto R(\cdot)z(\cdot)$ is a bounded multiplication operator on $X$.
 Let
 $\tilde K:= K\begin{bsmallmatrix}\Lambda(1) &0\\ 0 &\Theta(0)\end{bsmallmatrix}$ and $\tilde Q:= Q\begin{bsmallmatrix}\Lambda(1) &0\\ 0 &\Theta(0)\end{bsmallmatrix}$. By assumption, the matrix $\tilde K$ is invertible.
%$s\in \sigma(A)$ if and only if there exists $z\in H^1((0,1);\C^d)$ satisfying
As a bounded perturbation of a generator of $C_0$-group generates again a $C_0$-group, we obtain that the operator
%Since $\tilde A$ is a bounded perturbation of $A$, the operator $A$ generates a $C_0$-group, if and only if 
\begin{align*}
 \tilde {A} z(\zeta)
&=\begin{bmatrix}\Lambda(\zeta)& 0\\ 0&-\Theta(1-\zeta)\end{bmatrix} z'(\zeta)\\
 \D(\tilde A)&=\left(z\in H^1((0,1);\C^d)\mid z(1)=\tilde K^{-1}\tilde Q z(0) \right\}
\end{align*}
 generates a $C_0$-semigroup, but not a $C_0$-group. In particular, $Q_1:=\tilde K^{-1}\tilde Q$ is singular.

Since $A$ is a \riesz, due to Lemma \ref{large-gap}, there is a $K>0$ such that in the ball with radius $K\abs{\Re s}^2$ around every eigenvalue $s$ of $A$ lie at most $d$ eigenvalues. Thus there exist sequences $(t_n)_{n\in\N}\subset \mathbb R$  and $(r_n)_{n\in\N}\subset (0,\infty)$ with $t_n\rightarrow -\infty$ and $r_n\rightarrow \infty$ such that the ball with center $t_n$ and radius $r_n$ lie in $\rho(A)$. By Lemma \ref{pert-resolvent} we get
$$ \|(t_n-\tilde A)^{-1}\|\rightarrow 0.$$
However, for the operator $\tilde A$ the Lemma \ref{resolvent-explode}, is applicable which implies that  $\|(t_n-\tilde A)^{-1}\|\rightarrow \infty$.
This leads to a contradiction.   
\end{proof}
%----------------------

\section{Examples}

We consider  the one-dimensional wave equation  with boundary feedback as in  \cite{XuWei11}, but we allow for spatial dependent mass density and Young's modulus, given by
\begin{align}
w_{tt}(\zeta,t)&=\frac{1}{\rho(\zeta)} \frac{\partial }{\partial \zeta }(T(\zeta)w_{\zeta}(\zeta,t)), \quad \zeta \in [0,1], t\geq 0, \nonumber\\
w(0,t)&=0\nonumber\\
u(t)&=T(1)w_{\zeta}(1,t)\label{cls}\\
y(t)&=w_{t}(1,t),\nonumber\\
u(t)&=-\kappa y(t), \kappa >0,\nonumber
\end{align}
where $\zeta\in [0,1]$ is the spatial variable, $w(\zeta,t)$ describes the displacement of the point $\zeta$ of the string at time $t$, $T (\zeta)>0 $ is the Young's modulus of the string,  $\rho (\zeta ) >0$ is the mass density, and $\kappa>0$. 
We model this system as a port-Hamiltonian system. Therefore we introduce the state variable
 $x=\begin{bsmallmatrix} x_1(\zeta,t) \\ x_2(\zeta,t) \end{bsmallmatrix}$ with 
$x_1=\rho(\zeta)\frac{\partial w}{\partial t}$ (momentum), $x_2 = \frac{\partial w}{\partial \zeta}$ (strain)  and the state 
space $X=L^2((0,1);\C^d)$.
Since the meaning of $w(0,t)=0$ is that in the point $\zeta=0$ the string is fixed for all times, we model this boundary condition as $\frac{\partial w}{\partial t}w(0,t)=0$.
 Then the closed-loop system \eqref{cls} can equivalently be written as
\begin{align}
  \nonumber
  \frac{\partial }{\partial t} \begin{bmatrix} x_1(\zeta,t) \\ x_2(\zeta,t) \end{bmatrix} %&%= \begin{bmatrix} 0 & 1 \\ 1 & 0 
%\end{bmatrix} \frac{\partial }{\partial \zeta}\left( \begin{bmatrix} \frac{1}{\rho(\zeta)} & 0 \\ 0 & T(\zeta) 
%\end{bmatrix}\begin{bmatrix} x_1(\zeta,t) \\ x_2(\zeta,t) \end{bmatrix} \right) \\
 &= P_1  \frac{\partial }{\partial \zeta}\left( {\cal H}(\zeta)\begin{bmatrix} x_1(\zeta,t) \\ x_2(\zeta,t) 
\end{bmatrix} \right),\\
0&=\begin{bmatrix}
0 & 0 & 1&0 \\
\kappa & 1 & 0 &0
\end{bmatrix}
\begin{bmatrix}
\H(1) x(1,t)\\
\H(0) x(0,t)
\end{bmatrix},
\end{align}
where 
$ P_1=\begin{bsmallmatrix} 0 & 0\\ 0 & 1 \end{bsmallmatrix},  {\cal 
H}(\zeta)=\begin{bsmallmatrix}\frac{1}{\rho(\zeta)} & 0\\ 0 & T(\zeta)  \end{bsmallmatrix}, W_1=\begin{bsmallmatrix}
0 & 0\\
\kappa & 1 
\end{bsmallmatrix}, W_0=\begin{bsmallmatrix}
1 & 0\\
0 &0
\end{bsmallmatrix}
$ and $\kappa >0$.
%If we now want to consider the closed-loop system $x_2(1)=-\kappa x_1(1)$, $\kappa >0$.
We define the corresponding port-Hamiltonian operator $A$

%\begin{align*}
%A\begin{bmatrix} x_1(\zeta,t) \\ x_2(\zeta,t) 
%\end{bmatrix}&:= P_1  \frac{\partial }{\partial \zeta}\begin{bmatrix} x_1(\zeta,t) \\ x_2(\zeta,t) 
%\end{bmatrix},\end{align*}
\begin{align*}
Ax&:= P_1  \frac{\partial }{\partial \zeta}(\H x),\\
\D(A)&=\{ x\in X \mid \H x \in H^1((0,1);\C^2) \text{ and } \begin{bmatrix} W_1 & 
W_0\end{bmatrix}\begin{bmatrix}
 (\H x)(1)\\
 (\H x)(0)
\end{bmatrix}=0 \}
\end{align*}
and remind that due to \cite[Theorem 13.2.2]{JacZwa12} the system has a unique mild solution if $A$ generates a $C_0$-semigroup.
To show that the port-Hamiltonian operator $A$ is a \riesz, it is sufficient due to Theorem \ref{main} to prove that $A$ generates a $C_0$-group. So we have only to check that 
$W_1\H(1)Z^+(1)\oplus W_0\H(0)Z^-(0)=\C^d$ and $W_1\H(1)Z^-(1)\oplus W_0\H(0)Z^+(0)=\C^d$, where $Z^+(\zeta)$ denotes the span of the eigenvectors of $P_1\H(\zeta)$ corresponding to the positive eigenvalues of 
$P_1\H(\zeta)$ and $Z^-(\zeta)$ is the span of the eigenvectors of $P_1\H(\zeta)$ corresponding to the negative eigenvalues 
of 
$P_1\H(\zeta)$.
Defining $\gamma =\sqrt{ T(\zeta)/\rho ( \zeta) }$,  the matrix function $P_1 {\cal H}$ can be factorized as
\[    P_1 {\cal H} = \underbrace{\begin{bmatrix} \gamma & -\gamma \\ \rho^{-1} & \rho^{-1} \end{bmatrix}}_{S^{-1}} 
\underbrace{\begin{bmatrix} \gamma & 0 \\ 0 & -\gamma \end{bmatrix}}_\Delta
\underbrace{\begin{bmatrix} (2\gamma)^{-1} & \rho/2 \\  (2\gamma)^{-1} & \rho/2 \end{bmatrix}}_S. \]

It is easy to see that $Z^+(\zeta)=\Span \begin{bsmallmatrix} T(\zeta)\\\gamma(\zeta) \end{bsmallmatrix}$ and $Z^-(\zeta)=\Span \begin{bsmallmatrix} -T(\zeta)\\\gamma(\zeta) \end{bsmallmatrix}$. Then it holds for $\kappa \neq -\frac{T(1)}{\gamma(1)}$ and $\kappa \neq \frac{T(1)}{\gamma(1)}$
\begin{align*}
  W_1\H(1)Z^+(1)\oplus W_0\H(0)Z^-(0)&=
 \begin{bmatrix} 0\\ \kappa \gamma(1)+T(1)\end{bmatrix} \oplus \begin{bmatrix} -\gamma(0) \\0 \end{bmatrix}=\C^{2}, 
\end{align*}
\begin{align*}
W_1\H(1)Z^-(1)\oplus W_0\H(0)Z^+(0)%&=\begin{bmatrix}
&=\begin{bmatrix} 0\\-\kappa \gamma(1)+T(1)\end{bmatrix} \oplus \begin{bmatrix} \gamma(0) \\0 \end{bmatrix}=\C^{2}, 
\end{align*}

Thus, $A$ is a \riesz for $\kappa \neq -\frac{T(1)}{\gamma(1)}$ and $\kappa \neq \frac{T(1)}{\gamma(1)}$.

In contrast to \cite{XuWei11} we do not need  determine the eigenvalues exactly, which is only possible if $\rho$ and $T$
are constant.
%t coefficients  the eigenvalues and eigenvectors are easy to calculate. 
For spacial varying 
coefficients like $\rho(\zeta)=e^{\zeta}$ and $T(\zeta)=\zeta+1$ the problem is not analytically solvable, see 
\cite[Exercise 12.1]{JacZwa12}.
%--------------------------------

\subsection{Timoshenko-beam}
In \cite{JacZwa12} it is shown that the Timoshenko beam with boundary damping can be formulated as port-Hamiltonian system: It can be written in the form of \eqref{eqn:pde}, with
$P_1=\begin{bsmallmatrix}0 &1&0&0\\ 
1&0&0&0\\ 
0 &0&0&1\\ 
0 &0&1&0\end{bsmallmatrix}, \H(\zeta)=\begin{bsmallmatrix} K(\zeta) &0&0&0\\ 
0&\frac{1}{\rho(\zeta)}&0&0\\ 
0 &0&EI(\zeta)&1\\ 
0 &0&0& \frac{1}{I_\rho(\zeta)}\end{bsmallmatrix} $and
$P_0=\begin{bsmallmatrix}0 &0&0&-1\\ 
0&0&0&0\\ 
0 &0&0&0\\ 
1 &0&0&0 \end{bsmallmatrix}$,
where the $K(\zeta)$ denotes the shear modulus, $EI(\zeta)$ is the product of Young's modulus of elasticity and the moment of inertia of a cross section, $\rho(\zeta)$ is the mass per unit length and $I_{\rho}(\zeta)$ denotes the rotary moment of inertia of a cross section.  All these physical parameters are positive and continuously differentiable functions of $\zeta$.
To model the fact that the beam is clamped in at $\zeta=0$ and controlled at
$\zeta=1$ by the force and moment feedback, we add the boundary condition
$\begin{bsmallmatrix}0\\ 0\\ 0 \\ 0 \end{bsmallmatrix}=\begin{bsmallmatrix} W_1 & W_0\end{bsmallmatrix}\begin{bsmallmatrix} ({\cal H}x)(1,t) \\ ({\cal
         H}x)(0,t)\end{bsmallmatrix}$ with
$\begin{bsmallmatrix} W_1 & W_0\end{bsmallmatrix}=
\begin{bsmallmatrix}
0&0&0&0 &0&1&0&0\\
0&0&0&0 &0&0&0&1\\
1&\alpha_1&0&0&0&0&0&0\\
0&0&1&\alpha_2&0&0&0&0\\
\end{bsmallmatrix}$
and  $\alpha_1$, $\alpha_2$ are given positive gain feedback constants. 

Xu and Feng dedicate the paper \cite{XuFen02} to this example and they proved under the extra assumption that all physical constants are independent of $\zeta$ that the eigenvectors and generalized eigenvectors of the operator form a Riesz basis. This example is also revisited in \cite{Vil07} using another approach.
Using our main theorem, we can easy verify that the associated system operator is a \riesz.

\section{Closing remarks and open problems}

We have shown that a port-Hamil\-tonian system of the form (\ref{eqn:pde0}) is a Riesz spectral operator if and only if it generates a $C_0$-group. Many (hyperbolic) systems can be written into this form, with as main exception the Euler-Bernoulli beam equation. Of course the  basis property of this equation is well-studied, and many results are known, see e.g.\  \cite{GuoWan19}. However, we assert that the main result of this paper  does not hold for the Euler-Bernoulli beam equation.

In Theorem \ref{main}  we have shown that if the port-Hamil\-tonian systems (\ref{eqn:pde0}) is a Riesz spectral operator, then the eigenvalues (counted according to the algebraic multiplicity) can be decomposed into finitely many sets each having a uniform gap. If we count the eigenvalues without multiplicity, then \cite[Theorem 2]{JacZwa01b} shows that the eigenvalues  can be decomposed into at most $d$ sets each having a uniform gap.
We claim that this results holds true if we count the eigenvalues  according to the algebraic multiplicity.

One may ask whether the main theorem of this paper (Theorem \ref{main}) holds if we drop the assumption that $P_0$ is skew-symmetric. 
Our proof uses the fact that every well-posed port-Hamiltonian control system \eqref{eqn:pde} is exactly controllable in finite time and this property is only known in the case that $P_0$ is skew-symmetric.  Thus, if $P_0$ is an arbitrary $d\times d$-matrix, then our proof carries over to this more general case, provided we add the assumption that the corresponding port-Hamiltonian system is exactly controllable in finite-time. However, we assert that even when $P_0$ is not skew-symmetric, the system \eqref{eqn:pde} is exactly controllable in finite time, and thus this extra assumption would not be needed.

B.-Z.\ Guo and J.-M.\ Wang \cite{GuoWan19} studied the Riesz basis property for a closely related class of systems, that is, hyperbolic systems of the form  $\frac{\partial x}{\partial t} = K(\zeta) \frac{\partial x}{\partial \zeta} + C(\zeta) x$ with $K$ and $C$ diagonal. They showed that for their class of systems the state space can be split into two parts, one part generated by a $C_0$-group and the other generated by an operator without spectrum, see \cite[Theorem 4.10]{GuoWan19}.  In particular, the spectrum of their operators always lie in a strip parallel to the imaginary axis. This result does not generalize to our system class as the following example shows.
\begin{example}
We consider the port-Hamiltonian system
\begin{align*}
  \frac{\partial }{\partial t} \begin{bmatrix} x_1(\zeta,t) \\ x_2(\zeta,t) \end{bmatrix} &= 
  \begin{bmatrix} 1 & 0 \\ 0 & \frac{1}{2}  \end{bmatrix} \frac{\partial }{\partial \zeta} \begin{bmatrix} x_1(\zeta,t) \\ x_2(\zeta,t) 
\end{bmatrix}
  +  \begin{bmatrix} 0 & 1 \\ -1 & 0  \end{bmatrix}  
  \begin{bmatrix} x_1(\zeta,t) \\ x_2(\zeta,t) 
\end{bmatrix},\\
0&=\begin{bmatrix}
1 & 0 & 0&1 \\
0 & 1 & 0 &0
\end{bmatrix}
\begin{bmatrix}
 x(1,t)\\ x(0,t)
\end{bmatrix}.
\end{align*}
The system operator generates a $C_0$-semigroup, but there exists a sequence of eigenvalues which real parts converge to $-\infty$.
\end{example}

The following example shows that the equivalence $1)\Leftrightarrow 2)$ in Theorem \ref{main} does not hold for generators $A$ of $C_0$-semigroups $(T(t))_{t\ge 0}$ on Hilbert spaces even if we additionally assume that there exists admissible control operator $B\in\BL(\C^d,X_{-1})$ for $(T(t))_{t\ge 0}$ such that the control system $\dot{x}(t)=Ax(t)+Bu(t)$ is exactly controllable in finite time.
\begin{example}
Let  $A:\D(A)\subset \ell^2\to\ell^2$ be defined by $(Ax)_n=(s_n x_n)_n$, $(s_n)_{n\in \N}=(-2^n)_{n\in \N}$, and
$\D(A)=\{x\in \ell^2(\N)\mid \sum_{n\in \N}(1+\abs{s_n}^2)\abs{x_n}^2<\infty\}$. Clearly, $A$ is a discrete Riesz spectral operator, generates a $C_0$-semigroup, but not a $C_0$-group.
Here $X_{-1}=\ell^2_{-1}=\{(x_n)_{n\in\N}\mid\sum_{n\in \N}\frac{\abs{x_n}^2}{(1+\abs{s_n}^2)}<\infty\}$.
Hence, we can identify $B\in \BL(\mathbb C, \ell^2_{-1})$ with a sequence $(b_n)\in \ell^2_{-1}$. Let $(b_n)_{n\in\N}=(\sqrt{2^n})_{n\in\N}$.
Then it holds $(b_n)_{n\in\N}\in \ell^2_{-1}$.
The  Carleson measure criterion of Weiss \cite{Wei88} implies the admissibility  of $B$. 
Exact controllability in finite time follows from \cite[Theorem 2]{JacZwa01}.
\end{example}

%---------------------------------------

\bibliographystyle{alphaabrrv}
\bibliography{literaturabbNEU}

\begin{thebibliography}{RLGMZ14}

\bibitem[AI95]{AvdIva95}
S.~A. Avdonin and S.~A. Ivanov.
\newblock {\em Families of Exponentials}.
\newblock Cambridge University Press, Cambridge, 1995.

\bibitem[AJ14]{AugJac14}
B.~Augner and B.~Jacob.
\newblock Stability and stabilization of infinite-dimensional linear
  port-{H}amiltonian systems.
\newblock {\em Evol. Equ. Control Theory}, 3:207--229, 2014.

\bibitem[Aug16]{Aug16}
B.~Augner.
\newblock {\em Stabilisation of Infinite-dimensional Port-{H}amiltonian
  System}.
\newblock PhD thesis, University of Wuppertal, 2016.

\bibitem[Cur84]{Cur84}
R.~F. Curtain.
\newblock Spectral systems.
\newblock {\em Internat. J. Control}, 39(4):657--666, 1984.

\bibitem[CZ95]{CurZwa95}
R.~F. Curtain and H.~Zwart.
\newblock {\em An Introduction to Infinite-Dimensional Linear Systems Theory},
  volume~21 of {\em Texts in Applied Mathematics}.
\newblock Springer-Verlag, New York, 1995.

\bibitem[CZ20]{CuZw20}
R.~F. Curtain and H.~Zwart.
\newblock {\em Introduction to Infinite-Dimensional Systems Theory}, volume~71
  of {\em Texts in Applied Mathematics}.
\newblock Springer-Verlag, New York, 2020.

\bibitem[DMSB09]{DuinMacc09}
V.~Duindam, A.~Macchelli, S.~Stramigioli, and H.~Bruyninckx, editors.
\newblock {\em Modeling and Control of Complex Physical Systems}.
\newblock Springer-Verlag, Berlin, 2009.

\bibitem[DS71]{DunSch71}
N.~Dunford and J.~T. Schwartz.
\newblock {\em Linear Operators. {P}art {III}: {S}pectral Operators}.
\newblock Interscience Publishers [John Wiley \& Sons, Inc.], New York, 1971.

\bibitem[EMvdS07]{EbMS07}
D.~Eberard, B.~M. Maschke, and A.~J. van~der Schaft.
\newblock An extension of {H}amiltonian systems to the thermodynamic phase
  space: towards a geometry of nonreversible processes.
\newblock {\em Rep. Math. Phys.}, 60(2):175--198, 2007.

\bibitem[EN00]{EngNag00}
K.-J. Engel and R.~Nagel.
\newblock {\em One-Parameter Semigroups for Linear Evolution Equations}, volume
  194 of {\em Graduate Texts in Mathematics}.
\newblock Springer-Verlag, New York, 2000.

\bibitem[GGK90]{GohGolKaa90}
I.~Gohberg, S.~Goldberg, and M.~A. Kaashoek.
\newblock {\em Classes of Linear Operators. {V}ol. {I}}, volume~49 of {\em
  Operator Theory: Advances and Applications}.
\newblock Birkh\"{a}user Verlag, Basel, 1990.

\bibitem[GW19]{GuoWan19}
B.-Z. Guo and J.-M. Wang.
\newblock {\em Control of Wave and Beam {PDE}s}.
\newblock Communications and Control Engineering Series. Springer, Cham, 2019.
\newblock The Riesz Basis Approach.

\bibitem[GX04]{GuoXu04}
B.-Z. Guo and G.-Q. Xu.
\newblock Riesz bases and exact controllability of {$C_0$}-groups with
  one-dimensional input operators.
\newblock {\em Systems Control Lett.}, 52(3-4):221--232, 2004.

\bibitem[GZ01]{GuoZwa01}
B.~Guo and H.~Zwart.
\newblock Riesz spectral systems.
\newblock {\em MEMORANDUM}, No. 1594, 2001.

\bibitem[HP18]{HuPa}
J.-P. Humaloja and L.~Paunonen.
\newblock Robust regulation of infinite-dimensional port-{H}amiltonian systems.
\newblock {\em IEEE Trans. Automat. Control}, 63(5):1480--1486, 2018.

\bibitem[JK19a]{JacKai19}
B.~{Jacob} and J.~T. {Kaiser}.
\newblock On exact controllability of infinite-dimensional linear
  port-{H}amiltonian systems.
\newblock {\em IEEE Control Systems Letters}, 3(3):661--666, 2019.

\bibitem[JK19b]{JacKai18}
B.~Jacob and J.~T. Kaiser.
\newblock Well-posedness of systems of 1-{D} hyperbolic partial differential
  equations.
\newblock {\em J. Evol. Equ.}, 19(1):91--109, 2019.

\bibitem[JMZ15]{JacMorZwa15}
B.~Jacob, K.~Morris, and H.~Zwart.
\newblock {$C_0$}-semigroups for hyperbolic partial differential equations on a
  one-dimensional spatial domain.
\newblock {\em J. Evol. Equ.}, 15(2):493--502, 2015.

\bibitem[JZ99]{JacZwa99}
B.~Jacob and H.~Zwart.
\newblock Equivalent conditions for stabilizability of infinite-dimensional
  systems with admissible control operators.
\newblock {\em SIAM J. Control Optim.}, 37(5):1419--1455, 1999.

\bibitem[JZ01a]{JacZwa01b}
B.~Jacob and H.~Zwart.
\newblock Exact observability of diagonal systems with a finite-dimensional
  output operator.
\newblock {\em Systems Control Lett.}, 43(2):101--109, 2001.

\bibitem[JZ01b]{JacZwa01}
B.~Jacob and H.~Zwart.
\newblock Exact observability of diagonal systems with a one-dimensional output
  operator.
\newblock {\em Int. J. Appl. Math. Comput. Sci.}, 11(6):1277--1283, 2001.

\bibitem[JZ12]{JacZwa12}
B.~Jacob and H.~J. Zwart.
\newblock {\em Linear port-{H}amiltonian Systems on Infinite-Dimensional
  Spaces}, volume 223 of {\em Operator Theory: Advances and Applications}.
\newblock Birkh\"{a}user/Springer Basel AG, Basel, 2012.
\newblock Linear Operators and Linear Systems.

\bibitem[JZ18]{JacZwa19}
B.~Jacob and H.~Zwart.
\newblock An operator theoretic approach to infinite-dimensional control
  systems.
\newblock {\em GAMM-Mitt.}, 41(4):e201800010, 14, 2018.

\bibitem[LGZM05]{GorZwaMas05}
Y.~Le~Gorrec, H.~Zwart, and B.~Maschke.
\newblock Dirac structures and boundary control systems associated with
  skew-symmetric differential operators.
\newblock {\em SIAM J. Control Optim.}, 44:1864--1892, 2005.

\bibitem[LW83]{LouWex83}
J.-C. Louis and D.~Wexler.
\newblock On exact controllability in {H}ilbert spaces.
\newblock {\em J. Differential Equations}, 49(2):258--269, 1983.

\bibitem[RLGMZ14]{RaGoMaZw14}
H.~Ram\'{\i}rez, Y.~Le~Gorrec, A.~Macchelli, and H.~Zwart.
\newblock Exponential stabilization of boundary controlled port-{H}amiltonian
  systems with dynamic feedback.
\newblock {\em IEEE Trans. Automat. Control}, 59(10):2849--2855, 2014.

\bibitem[RW94]{RusWei94}
D.~L. Russell and G.~Weiss.
\newblock A general necessary condition for exact observability.
\newblock {\em SIAM J. Control Optim.}, 32(1):1--23, 1994.

\bibitem[RZLG17]{RaZwGo17}
H.~Ram\'{\i}rez, H.~Zwart, and Y.~Le~Gorrec.
\newblock Stabilization of infinite dimensional port-{H}amiltonian systems by
  nonlinear dynamic boundary control.
\newblock {\em Automatica J. IFAC}, 85:61--69, 2017.

\bibitem[Sta05]{Staf05}
O.~Staffans.
\newblock {\em Well-Posed Linear Systems}, volume 103 of {\em Encyclopedia of
  Mathematics and its Applications}.
\newblock Cambridge University Press, Cambridge, 2005.

\bibitem[SZ18]{SchZwa18}
J.~Schmid and H.~Zwart.
\newblock Stabilization of port-{H}amiltonian systems by nonlinear boundary
  control in the presence of disturbances.
\newblock {\em https://arxiv.org/abs/1804.10598v2}, 2018.

\bibitem[Tre00a]{Tret00a}
C.~Tretter.
\newblock Spectral problems for systems of differential equations
  {$y'+A_0y=\lambda A_1y$} with {$\lambda$}-polynomial boundary conditions.
\newblock {\em Math. Nachr.}, 214:129--172, 2000.

\bibitem[Tre00b]{Tret00b}
C.~Tretter.
\newblock Linear operator pencils {$A-\lambda B$} with discrete spectrum.
\newblock {\em Integral Equations Operator Theory}, 37(3):357--373, 2000.

\bibitem[TW09]{TucWei09}
M.~Tucsnak and G.~Weiss.
\newblock {\em Observation and Control for Operator Semigroups}.
\newblock Birkh\"{a}user Advanced Texts: Basler Lehrb\"{u}cher. Birkh\"{a}user
  Verlag, Basel, 2009.

\bibitem[TW14]{TucsWeis14}
M.~Tucsnak and G.~Weiss.
\newblock Well-posed systems - the {LTI} case and beyond.
\newblock {\em Automatica J. IFAC}, 50(7):1757--1779, 2014.

\bibitem[vdS06]{vanDerSchaft06}
A.~van~der Schaft.
\newblock Port-{H}amiltonian systems: an introductory survey.
\newblock In {\em International {C}ongress of {M}athematicians. {V}ol. {III}},
  pages 1339--1365. Eur. Math. Soc., Z\"{u}rich, 2006.

\bibitem[Vil07]{Vil07}
J.~Villegas.
\newblock {\em A Port-{H}amiltonian Approach to Distributed Parameter Systems}.
\newblock PhD thesis, University of Twente, Netherlands, 2007.

\bibitem[Was87]{Was87}
W.~Wasow.
\newblock {\em Asymptotic Expansions for Ordinary Differential Equations}.
\newblock Dover Publications, Inc., New York, 1987.
\newblock Reprint of the 1976 edition.

\bibitem[Wei88]{Wei88}
G.~Weiss.
\newblock Admissibility of input elements for diagonal semigroups on {$l^2$}.
\newblock {\em Systems Control Lett.}, 10(1):79--82, 1988.

\bibitem[XF02]{XuFen02}
G.-Q. Xu and D.-X. Feng.
\newblock The {R}iesz basis property of a {T}imoshenko beam with boundary
  feedback and application.
\newblock {\em IMA J. Appl. Math.}, 67(4):357--370, 2002.

\bibitem[XG03]{XuGuo03}
G.-Q. Xu and B.-Z. Guo.
\newblock Riesz basis property of evolution equations in {H}ilbert spaces and
  application to a coupled string equation.
\newblock {\em SIAM J. Control Optim.}, 42(3):966--984, 2003.

\bibitem[XW11]{XuWei11}
C.-Z. Xu and G.~Weiss.
\newblock Eigenvalues and eigenvectors of semigroup generators obtained from
  diagonal generators by feedback.
\newblock {\em Commun. Inf. Syst.}, 11(1):71--104, 2011.

\bibitem[ZLMV10]{ZwaGorMasVil10}
H.~Zwart, Y.~{Le Gorrec}, B.~Maschke, and J.~Villegas.
\newblock Well-posedness and regularity of hyperbolic boundary control systems
  on a one-dimensional spatial domain.
\newblock {\em ESAIM Control Optim.~Calc.~Var.}, 16(4):1077--1093, 2010.

\bibitem[Zwa10]{Zwa10}
H.~Zwart.
\newblock Riesz basis for strongly continuous groups.
\newblock {\em J. Differential Equations}, 249(10):2397--2408, 2010.

\end{thebibliography}

\end{document}